\newcommand{\al}{\alpha}
\newcommand{\sgn}{\mathop{\mathrm{sgn}}}
\newtheorem{thm}{Theorem}[section]
\newtheorem{cor}[thm]{Corollary}
\newtheorem{rem}[thm]{Remark}
\newtheorem{lemma}[thm]{Lemma}
\newenvironment{proof}{\begin{trivlist} \item[] {\em Proof:}}{\hfill $\Box$
                       \end{trivlist}}
\newenvironment{proofthm}{\begin{trivlist} \item[] {\em Proof of the Theorem:}}{\hfill $\Box$
                       \end{trivlist}}
\begin{document}

\title{Remarks on geometric properties of SQG sharp fronts and $\alpha$-patches }
\author{Angel Castro, Diego C\'ordoba, \\ Javier G\'omez-Serrano, Alberto Mart\'in Zamora}

\maketitle

\begin{abstract}

Guided by numerical simulations, we present the proof of two results concerning the behaviour of SQG sharp fronts and $\alpha$-patches. We establish that ellipses are not rotational solutions and we prove that initially convex interfaces may lose this property in finite time.\\

\vskip 0.3cm
\textit{Keywords: Surface Quasigeostrophic, incompressible flow, contour dynamics, computer-assisted.}

\end{abstract}

\section{Introduction}

Our goal in this article is to investigate some simple questions about the evolution of the surface quasi-geostrophic (SQG) front, some of which are well known for the vortex patch problem. The surface quasi-geostrophic equation was introduced in the mathematical community by Constantin, Majda and Tabak in \cite{Constantin-Majda-Tabak:formation-fronts-qg}. This equation is derived considering small Rossby and Ekman numbers and constant potential vorticity. It provides a mathematical description of the evolution of the temperature from a general quasi-geostrophic system for atmospheric and oceanic flows (see  \cite{Pedlosky:geophysical} for more details).

 Leaving aside its interest from a geophysical point of view, the SQG equation is a two dimensional model of the principal equations of inviscid fluid dynamics in 3D, namely, the incompressible Euler equations. The SQG system is the following set of equations:

\begin{equation}
\left\{
\begin{array}{rl}
\partial_t \theta (x,t)+ u(x,t) \cdot \nabla \theta(x,t)& = 0, \ \ (x,t) \in \mathbb{R}^2 \times \mathbb{R}_+\\
  u(x,t) & = \nabla^\perp \Psi (x,t)\\
\theta (x,t) & = \Lambda \Psi (x,t)\\
\end{array}
\right.
\end{equation}

	where $\Lambda = (-\Delta)^{\frac{1}{2}}$ and $\Psi$ is the stream function.
		
 The first equation simply represents the fact that the temperature, $\theta$, is advected by the velocity $u$. The second and third equations relate the temperature, $\theta$, with the velocity, $u$, through a non local operator. Those two equations can be rewritten as $ u = ( - R_2 \theta, R_1 \theta)$ where $R_i$ are the Riesz Transforms, given in $\mathbb{R}^2 $ by
 \begin{align*}
 R_i (f)(x) = \frac{1}{2\pi} \int_{\mathbb{R} ^2} \frac{ (x_i - y_i) f(y)} { \vert x - y \vert^3} dy.\\
\end{align*}

Motivated by the articles \cite{Constantin-Majda-Tabak:formation-fronts-qg} and \cite{Held-Pierrehumbert-Garner-Swanson:sqg-dynamics}, a lot of effort has been devoted to understanding these equations. In particular, the problem of whether the SQG system presents finite time singularities or not is open.

  The existence of global weak solutions in $L^2$ was proven by Resnick in \cite{Resnick:phd-thesis-sqg-chicago} using an extra cancellation due to the oddness of the Riesz transform. A particular kind of weak solution for an active scalar are the so called \textit{patches}, i.e., solutions for which $\theta$ is
 a step function:


  \begin{align}
  \theta(x) =
  \left\{
 \begin{array}{ll}
   \theta_1, \text{ if } \ \ x \in \Omega(t) \\ 
   \theta_2, \text{ if }  \ \ x \in \Omega(t) ^c, \\
   \end{array}
  \right.
\end{align}

 where $ \Omega(0)$ is given by the initial distribution of $\theta$, and $\Omega(t)$ is the evolution of $\Omega(0)$ under the velocity field $u$ given by $u(x,t)  = \nabla^\perp \Lambda^{-1} \theta (x,t) $.

  The evolution of such distribution is completely determined by the evolution of the boundary, allowing the problem to be treated as a non-local one dimensional equation for the contour of $\Omega(t)$. In this setting, local existence of smooth solutions was first obtained for analytic curves by Rodrigo in \cite{Rodrigo:evolution-sharp-fronts-qg}. The question of local existence of simply connected  \textit{patches} with Sobolev regularity of its boundary was addressed by Gancedo in \cite{Gancedo:existence-alpha-patch-sobolev}.

 To get a better understanding of the behaviour of solutions of these interface problems, several numerical experiments have been performed. In \cite{Cordoba-Fontelos-Mancho-Rodrigo:evidence-singularities-contour-dynamics} the problem of the evolution of two patches  is studied. Their simulations suggest an asymptotically self-similar singular scenario in which the distance between both patches goes to zero in finite time while simultaneously the curvature of the boundaries blows up.   Recently, Gancedo and Strain \cite{Gancedo-Strain:absence-splash-muskat-SQG} proved that in fact, no splash singularity can be formed, i.e., two interfaces can not collapse in a point, if the interfaces remain smooth.

 Also in \cite{Cordoba-Fontelos-Mancho-Rodrigo:evidence-singularities-contour-dynamics}, a new family of patch problems interpolating between the vortex patch and the SQG patch is introduced: the $\alpha $-patch model. For the vortex patch problem, the velocity field is obtained as $ v = \nabla ^\perp \Delta^{-1} \theta$, where $\theta$ is the vorticity, while for SQG the velocity is given by $ v = \nabla^\perp \Delta^{-\frac{1}{2}} \theta.$ The velocity of the $\alpha$-patch is defined by $ v = \nabla^\perp \Delta^{- (1 - \frac{\alpha}{2})} \theta$, for $ \alpha \in (0,2)$.  The local existence proof for $\alpha \in (1,2)$ can be found in \cite{Chae-Constantin-Cordoba-Gancedo-Wu:gsqg-singular-velocities}.

 The evolution equation for the interface of an $\alpha-$ patch, which we parametrize as a $2 \pi$ periodic curve $z(x)$, can be written as
 \begin{align}
\label{Ecuacion-alpha-patch}
 \partial_t z(x,t) = -\frac{(\theta_2 - \theta_1) \Gamma(\frac{\alpha}{2})}{\pi^2 2^{2-\al}\Gamma(\frac{2-\al}{2})}  \int_{0} ^{ 2 \pi} \frac{ \partial_x z (x,t) - \partial_x z(x-y,t) }{ \vert z(x,t) - z(x-y, t) \vert^{\alpha}} dy
 \end{align}

if $ 0 < \alpha < 2$, and as

\begin{align}
 \partial_t z(x)  = \frac{(\theta_2 - \theta_1)}{2 \pi}\int_0 ^{2 \pi} \log(|z(x) - z(x-y)|) \partial_x z(x-y)dy
\label{Ecuation-Vortex-Patch}
\end{align}

for the vortex patch, due to the fact that the evolution of the boundary of the patch is unchanged if the velocity is modified by adding a tangential term to the boundary, since that amounts just to a change of parametrization of the interface. For more details see \cite{Gancedo:existence-alpha-patch-sobolev}, \cite{Rodrigo:evolution-sharp-fronts-qg}. We make use of this property in our numerical simulation, adding a tangential term in order to keep the modulus of the tangent vector constant in space.


 In  \cite{Scott-Dritschel:self-similar-sqg}, based on numerical simulations, it is suggested that an elliptical patch with a big aspect ratio between its axes may develop a self-similar singularity with an explosive growth of the curvature. In \cite{Scott:scenario-singularity-quasigeostrophic}, it was already pointed out that small perturbations of thin strips may lead to a self similar cascade of instabilities, leading to a possible arc chord blow up.

	Beyond that, very little is known about the qualitative behaviour of patch like solutions of the SQG equation.

	 The analogous problem for the vorticity formulation of 2D Euler $(\alpha = 0)$ is better understood. The global existence and uniqueness of weak solutions of the 2D Euler in vorticity formulation is due to Yudovich \cite{Yudovich:Nonstationary-ideal-incompressible}. Regularity preservation for $\mathcal{C}^{1, \alpha}$ patches was obtained by Chemin using techniques from paradifferential calculus in \cite{Chemin:persistance-structures-fluides-incompressibles}. Another proof of that result, which highlights the extra cancellation on semi spheres of even kernels, can be found in \cite{Bertozzi-Constantin:global-regularity-vortex-patches} by Bertozzi and Constantin.

  In recent years, Denisov has studied the process of merging for the vortex patch problem. This is the scenario showed by the numerics of \cite{Cordoba-Fontelos-Mancho-Rodrigo:evidence-singularities-contour-dynamics} for the alpha-patch. However, for the vortex patch problem, the collapse in a point can not happen in finite time, the distance between the two patches can decay at most as fast as a double exponential. Denisov proves, in  \cite{Denisov:sharp-corner-euler-patches}, that this bound is sharp if one is allowed to modify slightly the velocity by superimposing a smooth background incompressible flow.

	In addition, it is known that there exist several solutions which evolve by rotating with constant angular velocity around its center of mass. The ellipses are the most well known example of solutions exhibiting this behaviour, and in fact, the only simply connected ones for which there are explicit formulae. The existence of that kind of solutions can be proven by a bifurcation analysis from the circular solution. For more details, see \cite{Hmidi-Mateu-Verdera:rotating-vortex-patch} and references therein.

Nowadays, the bigger computation capacity of computers has lead to their use as a mathematical tool. However, floating-point operations can result in numerical errors. To deal with this matter and be able to prove rigorous results, we use the so-called \emph{interval arithmetics}, in which instead of working with arbitrary real numbers, we perform computations over intervals which have representable numbers as endpoints. On these objects, an arithmetic is defined in such a way that we are guaranteed that for every $x \in X, y \in Y$
\begin{align*}
x \star y \in X \star Y,
\end{align*}

for any operation $\star$. For example,
\begin{align*}
[\underline{x},\overline{x}] + [\underline{y},\overline{y}] & = [\underline{x} + \underline{y}, \overline{x} + \overline{y}] \\
[\underline{x},\overline{x}] \times [\underline{y},\overline{y}] & = [\min\{\underline{x}\underline{y},\underline{x}\overline{y},\overline{x}\underline{y},\overline{x}\overline{y}\},\max\{\underline{x}\underline{y},\underline{x}\overline{y},\overline{x}\underline{y},\overline{x}\overline{y}\}]
\end{align*}
We can also define the interval version of a function $f(X)$ as an interval $I$ that satisfies that for every $x \in X$, $f(x) \in I$. Rigorous computation of integrals has been theoretically developed since the seminal work of Moore and many others \cite{Berz-Makino:high-dimensional-quadrature,Kramer-Wedner:adaptive-gauss-legendre-verified-computation,Lang:multidimensional-verified-gaussian-quadrature,Moore-Bierbaum:methods-applications-interval-analysis,Tucker:validated-numerics-book}, and has had applications in physics and fluid mechanics \cite{Holzmann-Lang-Schutt:gravitation-verified-quadrature,GomezSerrano-GraneroBelinchon:turning-muskat-computer-assisted}. In order to perform them we used the C-XSC library \cite{CXSC}.

Guided by new numeric simulations of the evolution of elliptical patches, we establish the following two results for the general $\alpha$-patch:

\begin{itemize}

\item The ellipses are not rotating patches for the equation \eqref{Ecuacion-alpha-patch} with  $0 < \alpha < 2$.

\item For every $ 0 < \alpha < 2$, there exists a solution of the $\alpha$-patch equation that begins convex, and after a finite time it is no longer convex. Moreover, there exists a solution of the vortex patch problem that has the same property.
\end{itemize}

The proof of those two results relies on establishing a sign for certain integrals. For the first theorem, we can do it by performing certain manipulations in the integral. However, for the second theorem we recur to a computer assisted proof.

The article is organized as follows: In the second section we discuss our numerical experiments. Section 3 presents a simple proof of the fact that ellipses are not rotating solutions of the $\alpha$-patch problem for $ \alpha > 0$. This is followed by a computer assisted proof of the loss of convexity in Section 4. \\
		
\section{Numerical Simulations}

In order to obtain insight on how  the ellipses evolve under SQG dynamics we performed simulations for elliptical SQG patches with varying aspect ratio. From these simulations three main differences with the behaviour under vortex patch evolution are easily appreciated:

\begin{itemize}
\item The ellipses are not rotating solutions.
\item Initially convex patches may lose that property.
\item Possible singularity formation.
\end{itemize}

For ellipses with an aspect ratio close to 1, the evolution is similar to the one of the vortex patch problem, since all points at the boundary behave in similar way. However, as the aspect ratio is increased, the fact that the ellipses do not rotate and lose convexity becomes clearer.


\begin{centering}
\begin{figure}[h!]
\centering
\includegraphics[width=0.45\textwidth]{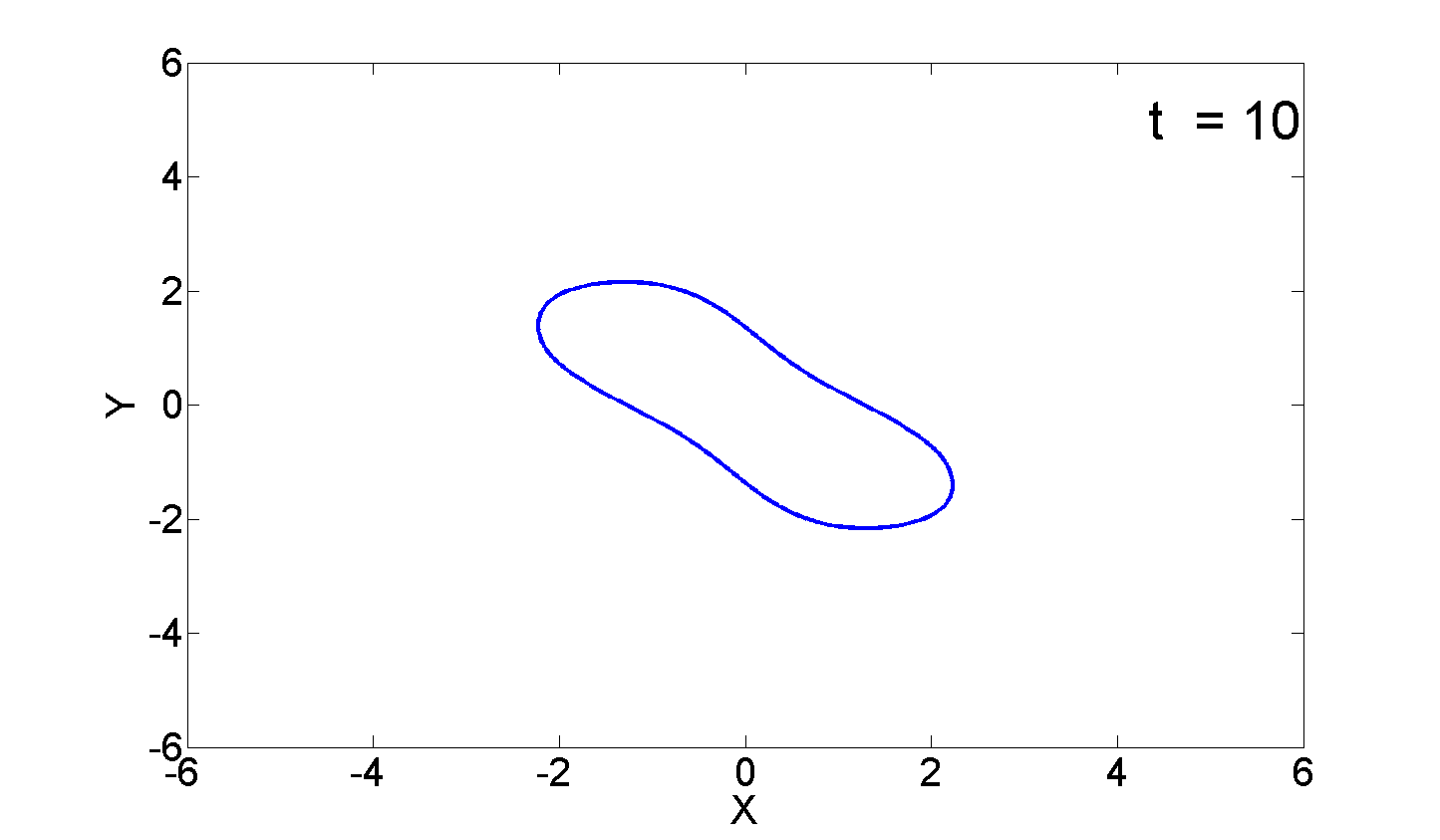}
\includegraphics[width=0.45\textwidth]{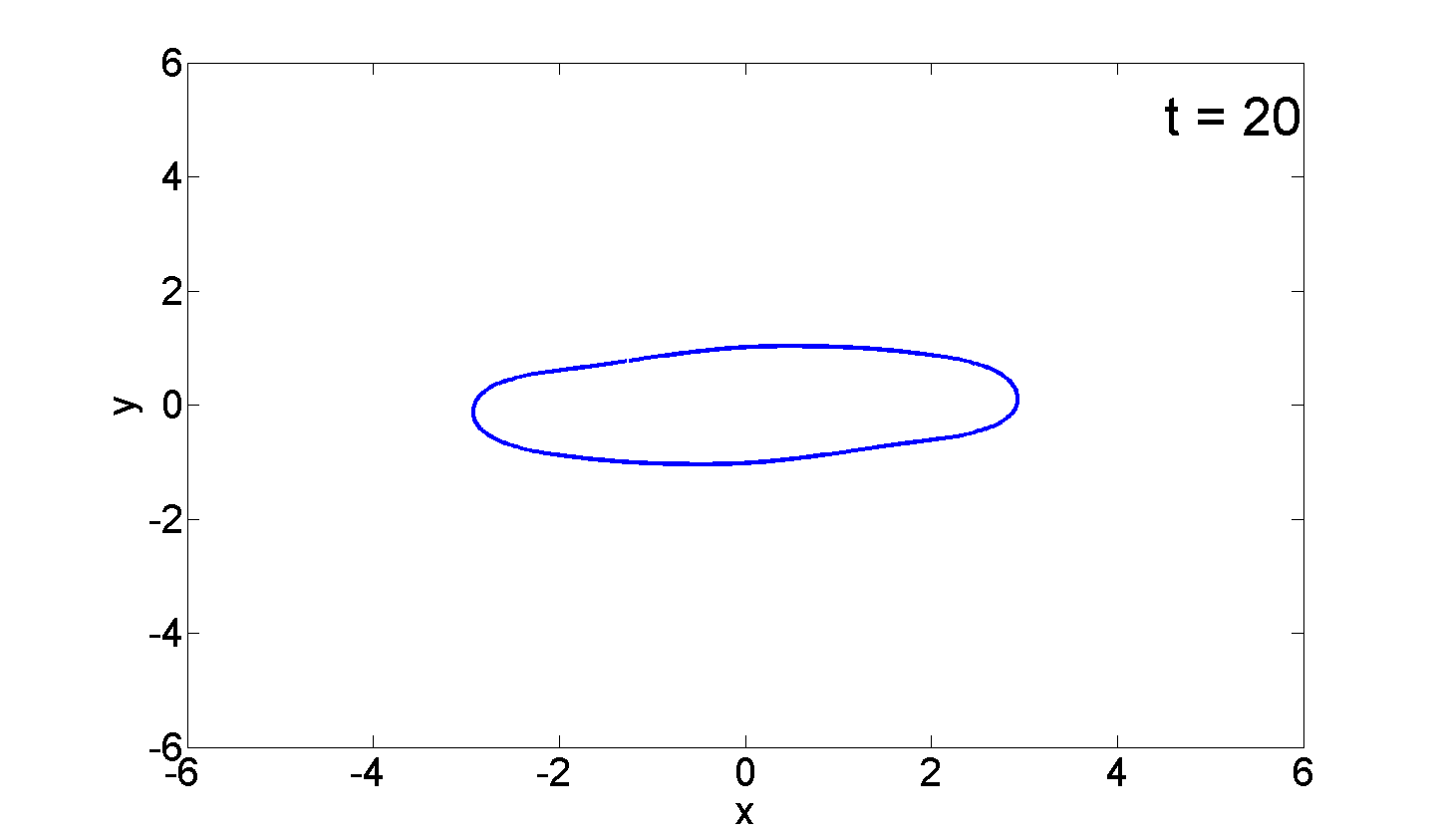}\\
\centering
\includegraphics[width=0.45\textwidth]{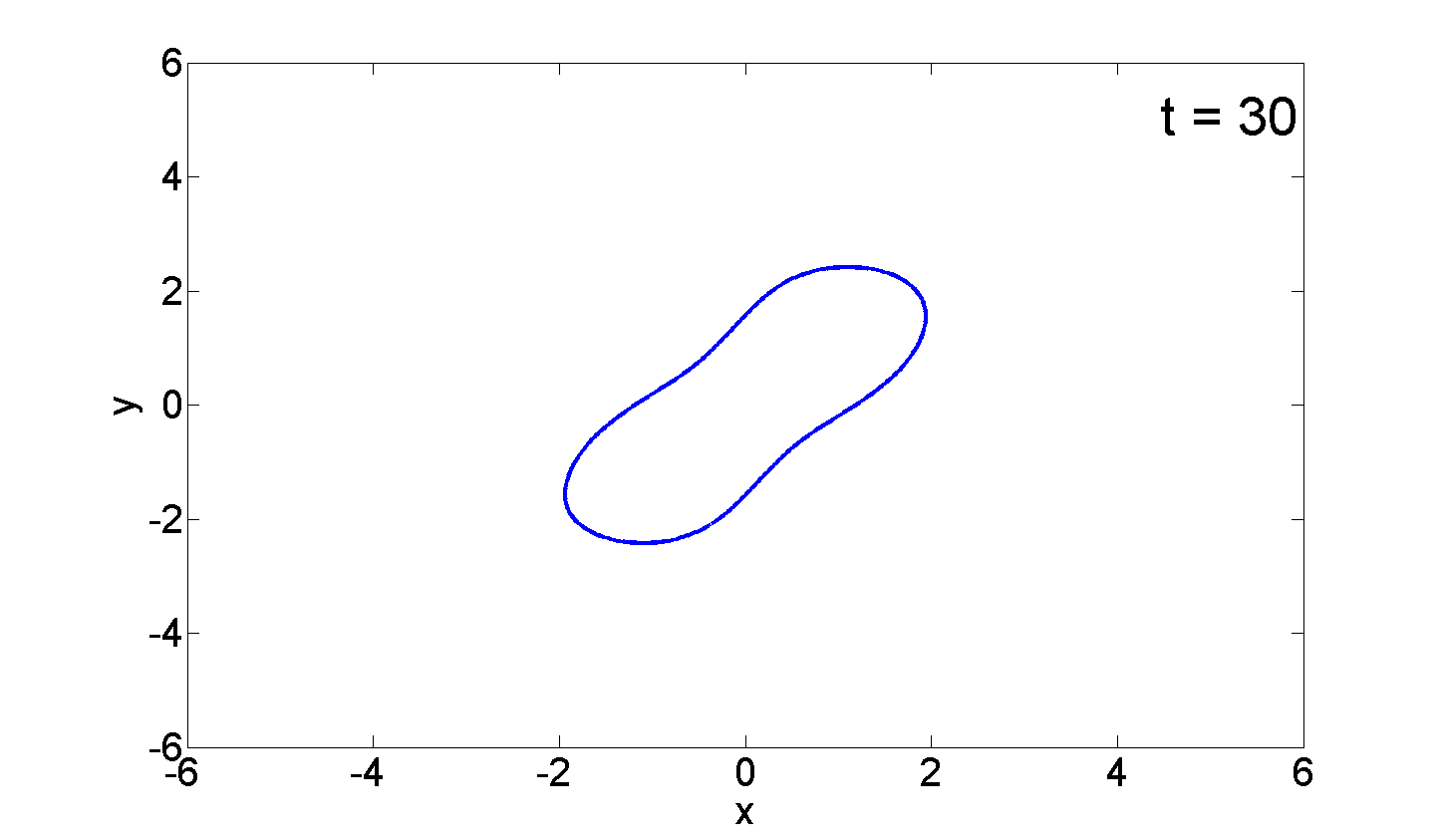}
\includegraphics[width=0.45\textwidth]{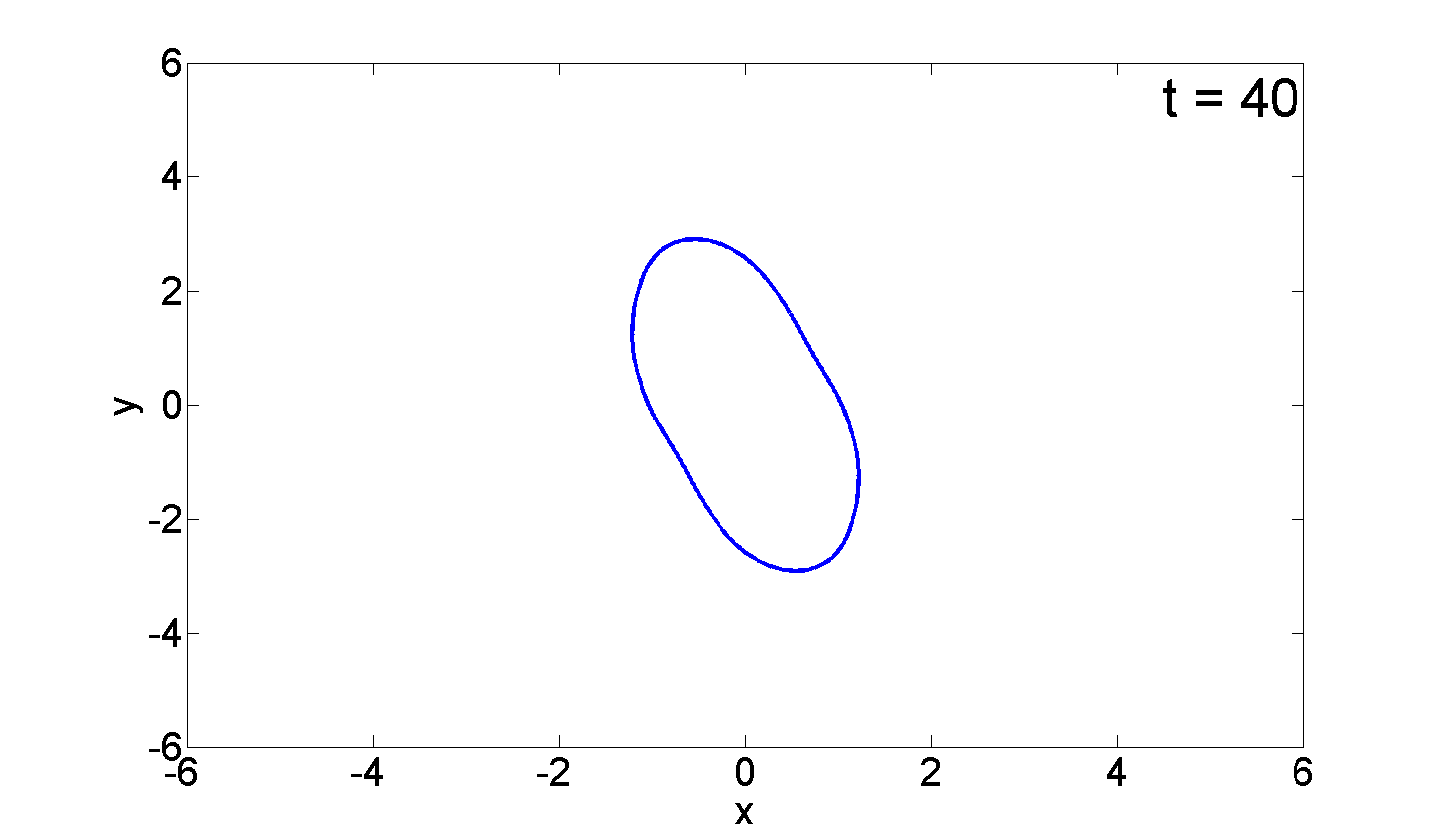}\\
\label{Elipse13 t=10}
\caption{Semiaxes  1-3, \ $ t=10,20,30,40$}
\end{figure}
\end{centering}

Figures 1-3 represent four timestamps for the evolution of the ellipses with principal semiaxes 1-3, 1-5 and 1-6. The patch with initial data $ z(x) = (\cos( x), 3 \sin(x)) $ displays a combination of a rotating motion with a smaller scale oscillation which leads to loss of convexity.

When the bigger semiaxis is increased to 5, $z(x) = ( \cos( x), 5 \sin(x))$, the patch exhibits an almost periodic behaviour in which the arc chord, i.e., the distance between different points at the boundary, decreases but eventually increases again. In addition, the loss of convexity is evident. See Figure 2.


\begin{figure}[h!]
\centering
\includegraphics[width=0.45\textwidth]{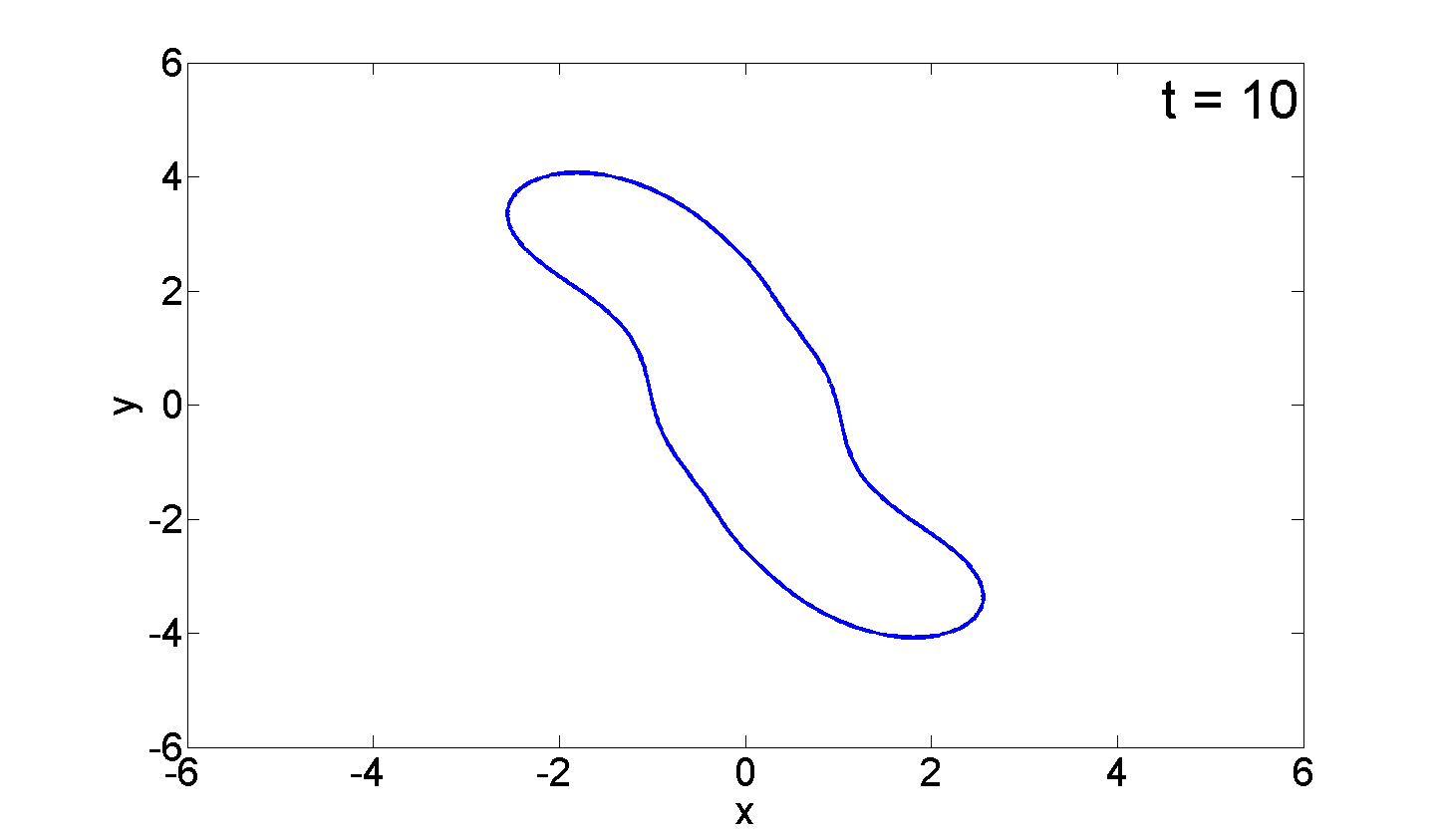}
\includegraphics[width=0.45\textwidth]{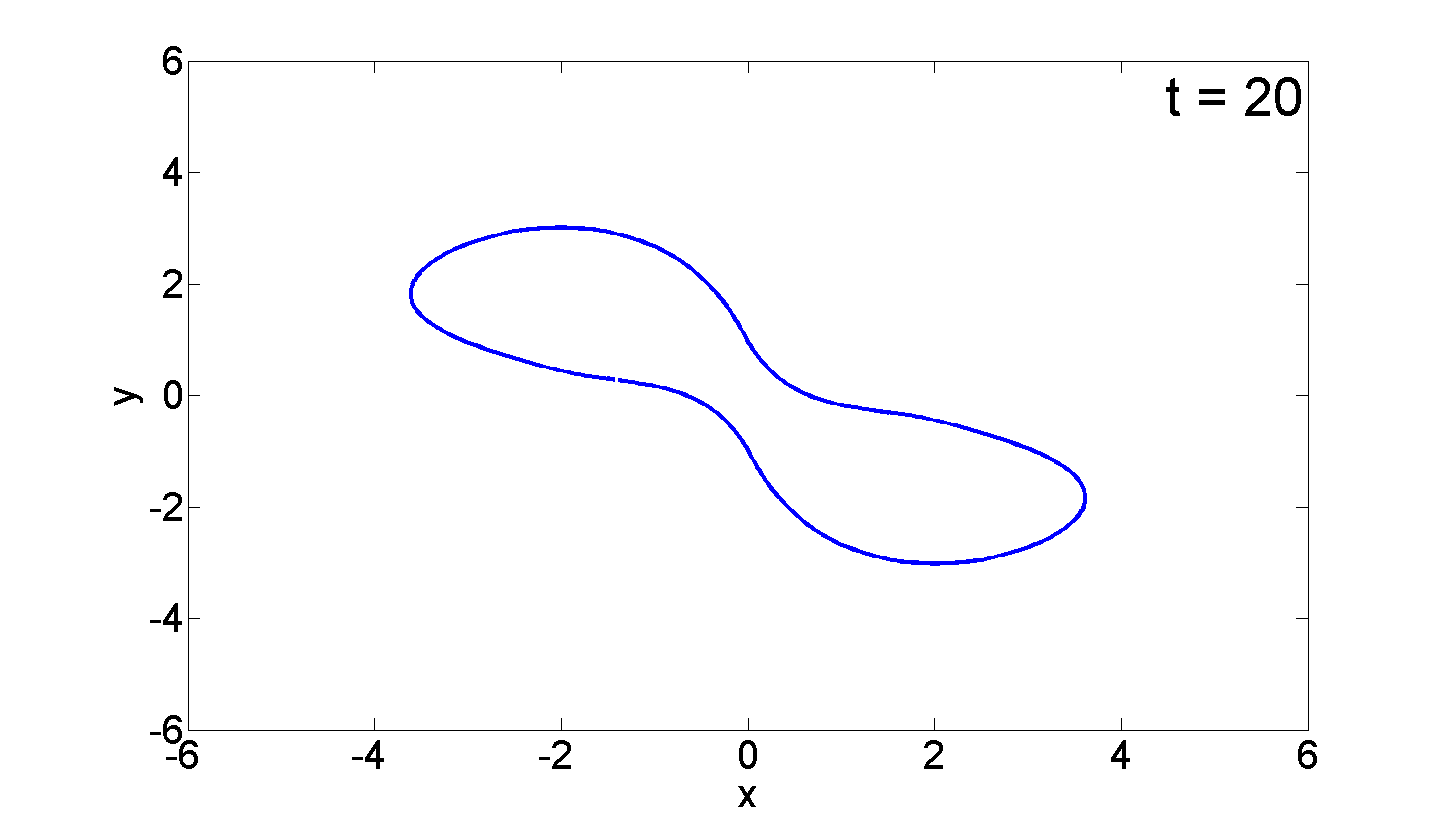}\\
\centering
\includegraphics[width=0.45\textwidth]{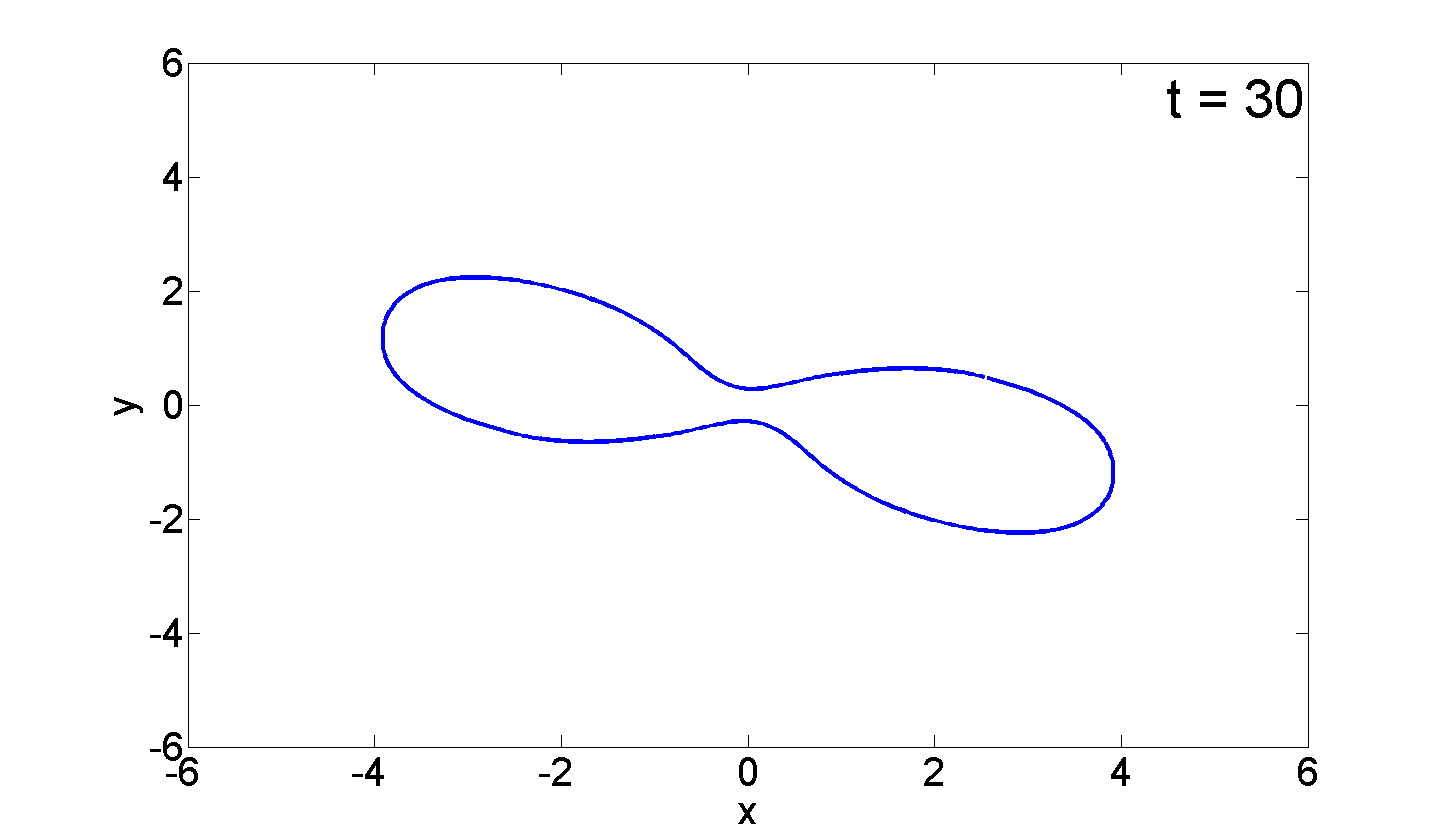}
\includegraphics[width=0.45\textwidth]{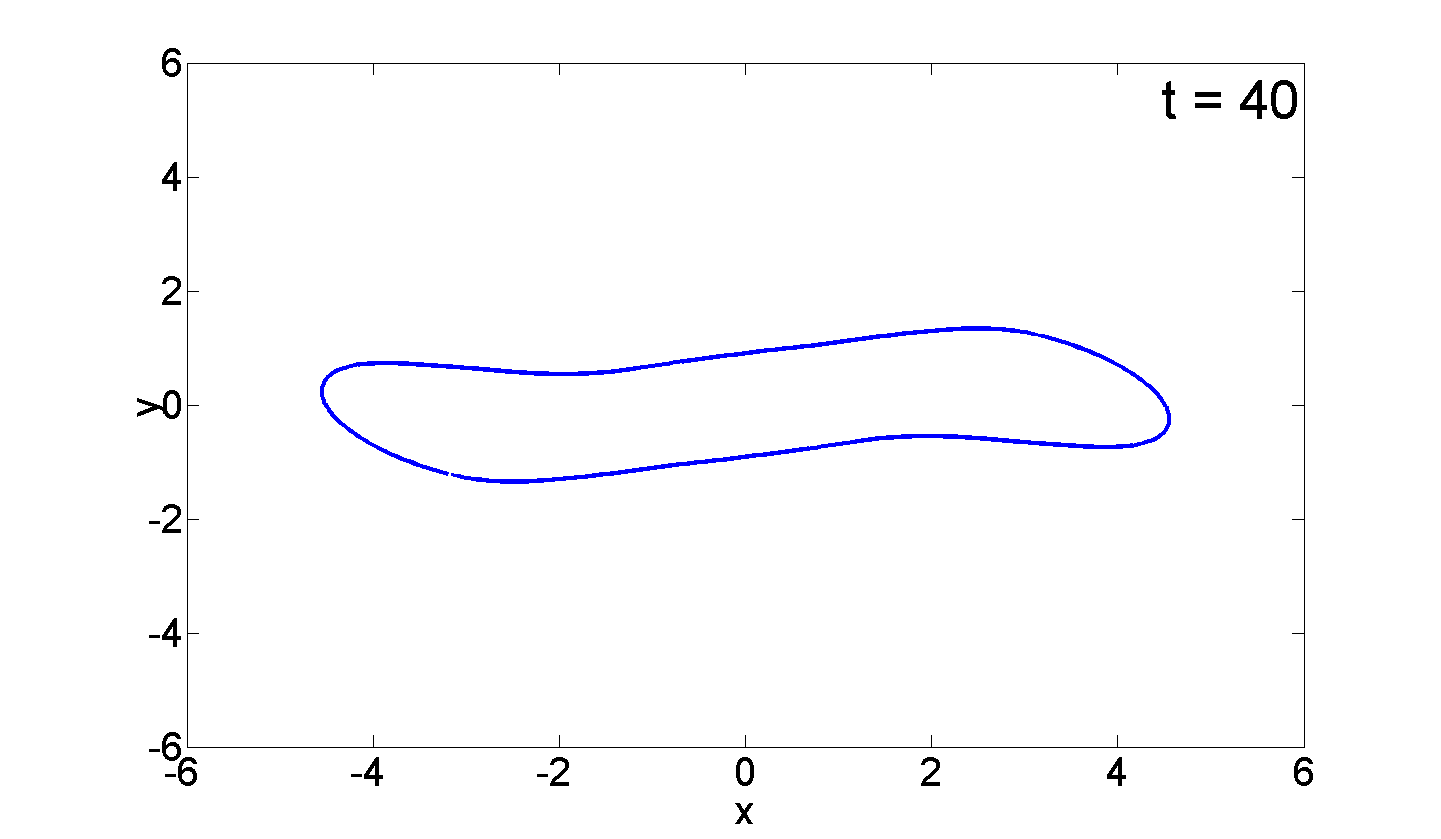}\\
\caption{Semiaxes 1-5,\ $ t=10,20,30,40$}
\end{figure}

For the ellipses with a bigger difference between its principal semiaxes, the simulations suggest that the arc chord goes to zero in finite time, developing a singularity. This question has already been addressed in  \cite{Scott-Dritschel:self-similar-sqg} and \cite{Scott:scenario-singularity-quasigeostrophic}, where more precise numerical simulations are discussed which suggest a self similar behaviour. We remark that our simulations are here used as a tool to gain intuition, not as a final purpose themselves and are by no means of a comparable accuracy to those in \cite{Scott-Dritschel:self-similar-sqg,Scott:scenario-singularity-quasigeostrophic}.


\begin{figure}[h!]
\centering
\includegraphics[width=0.45\textwidth]{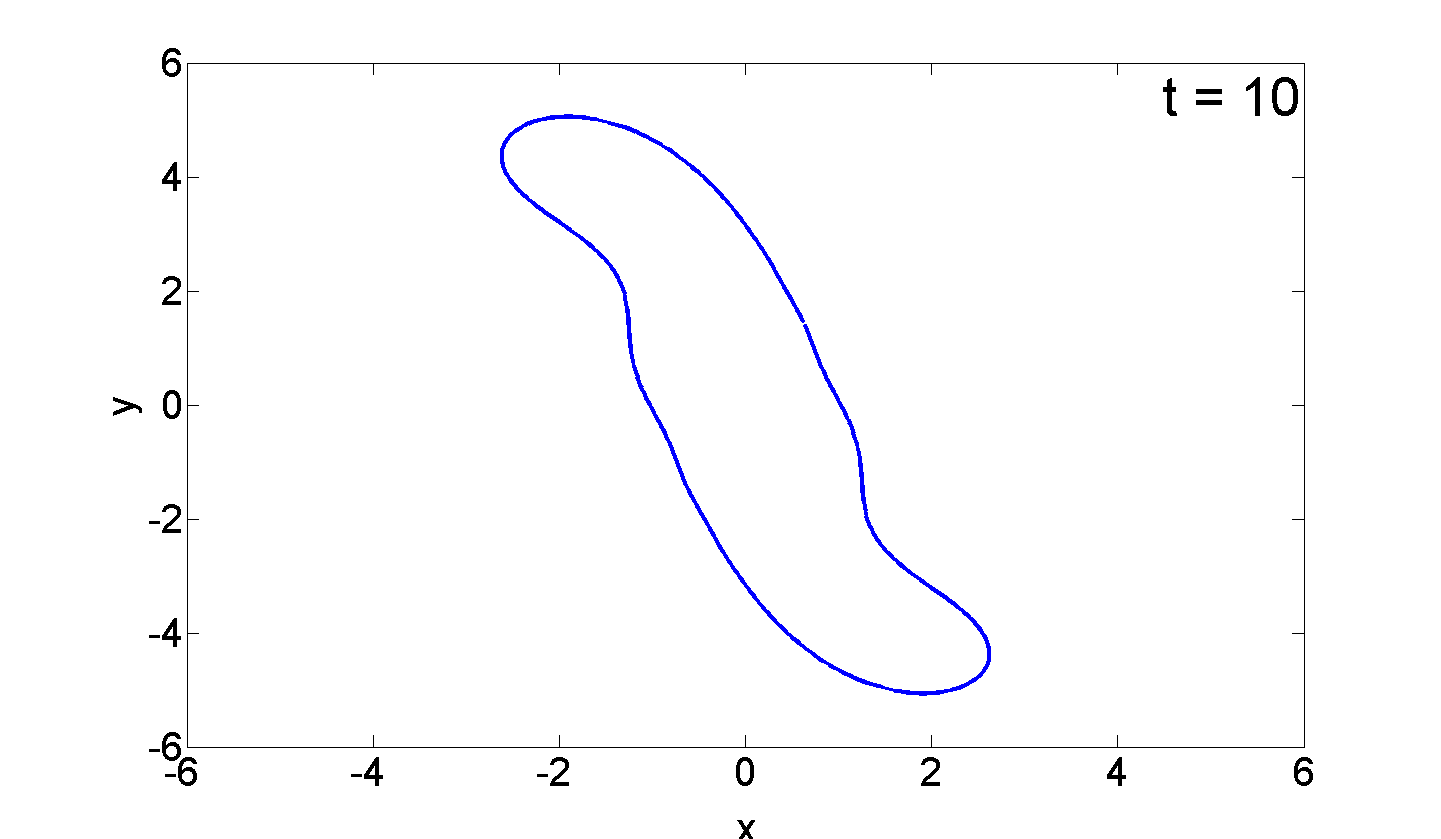}
\includegraphics[width=0.45\textwidth]{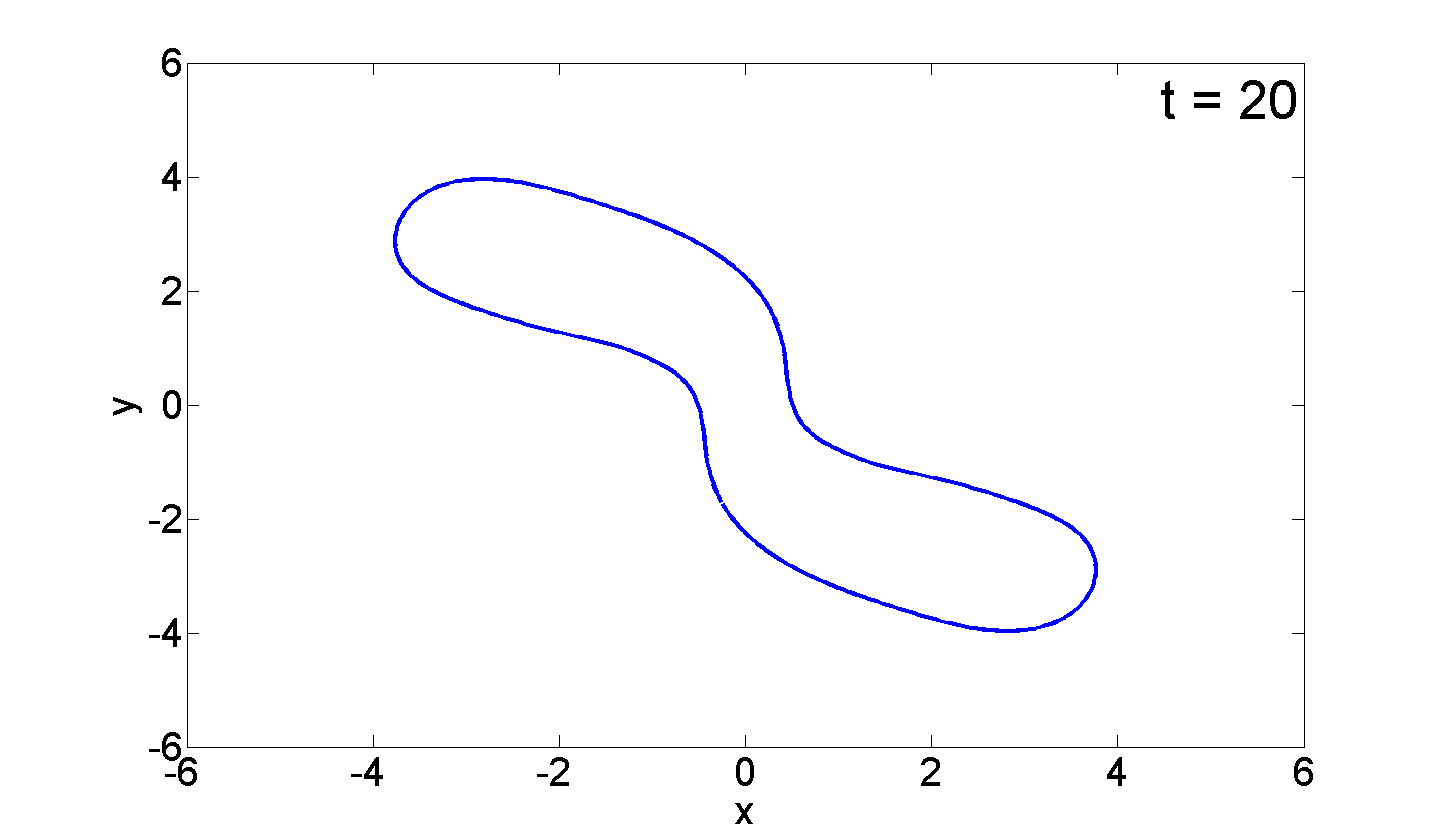}\\
\centering
\includegraphics[width=0.45\textwidth]{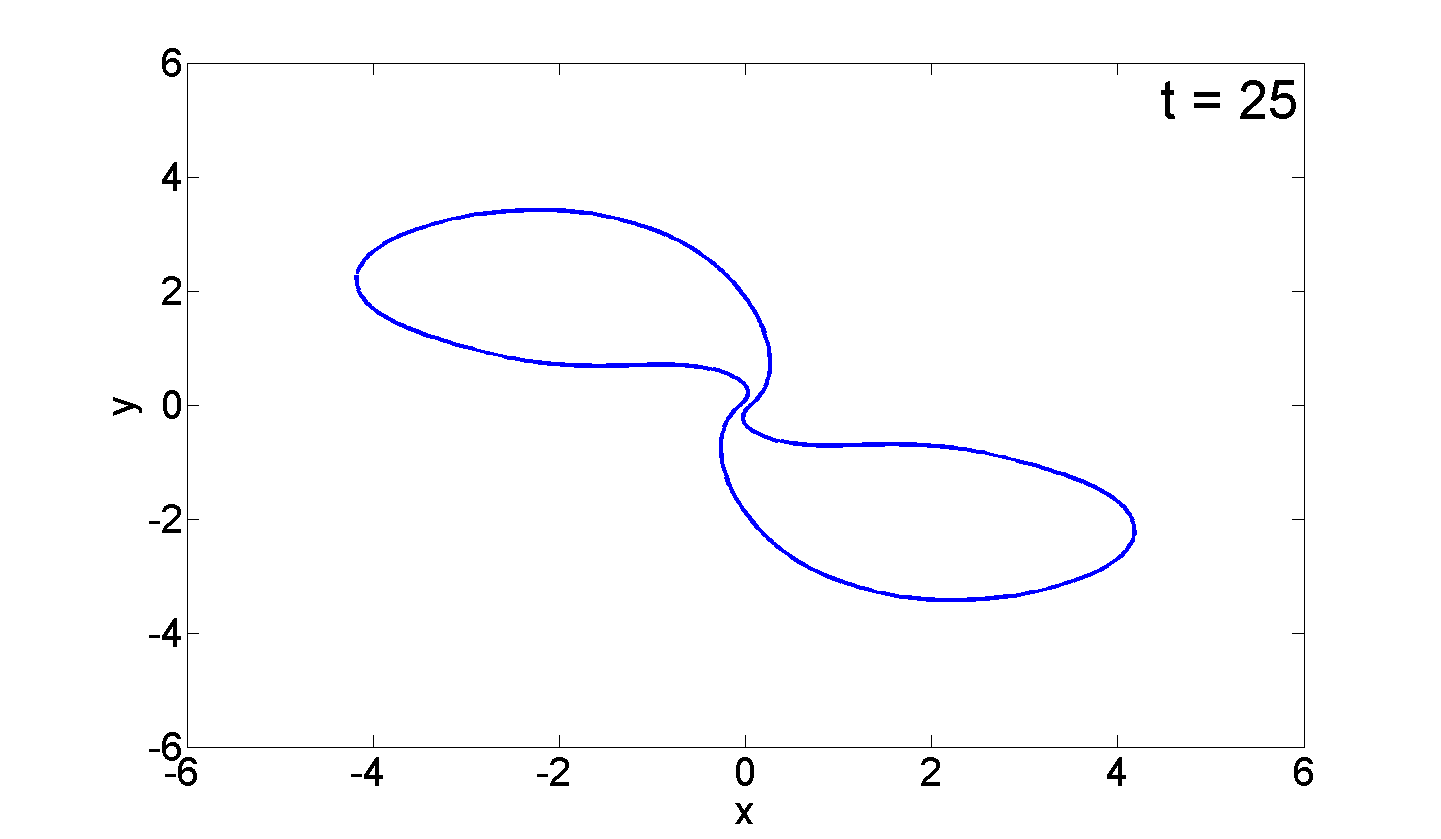}
\includegraphics[width=0.45\textwidth]{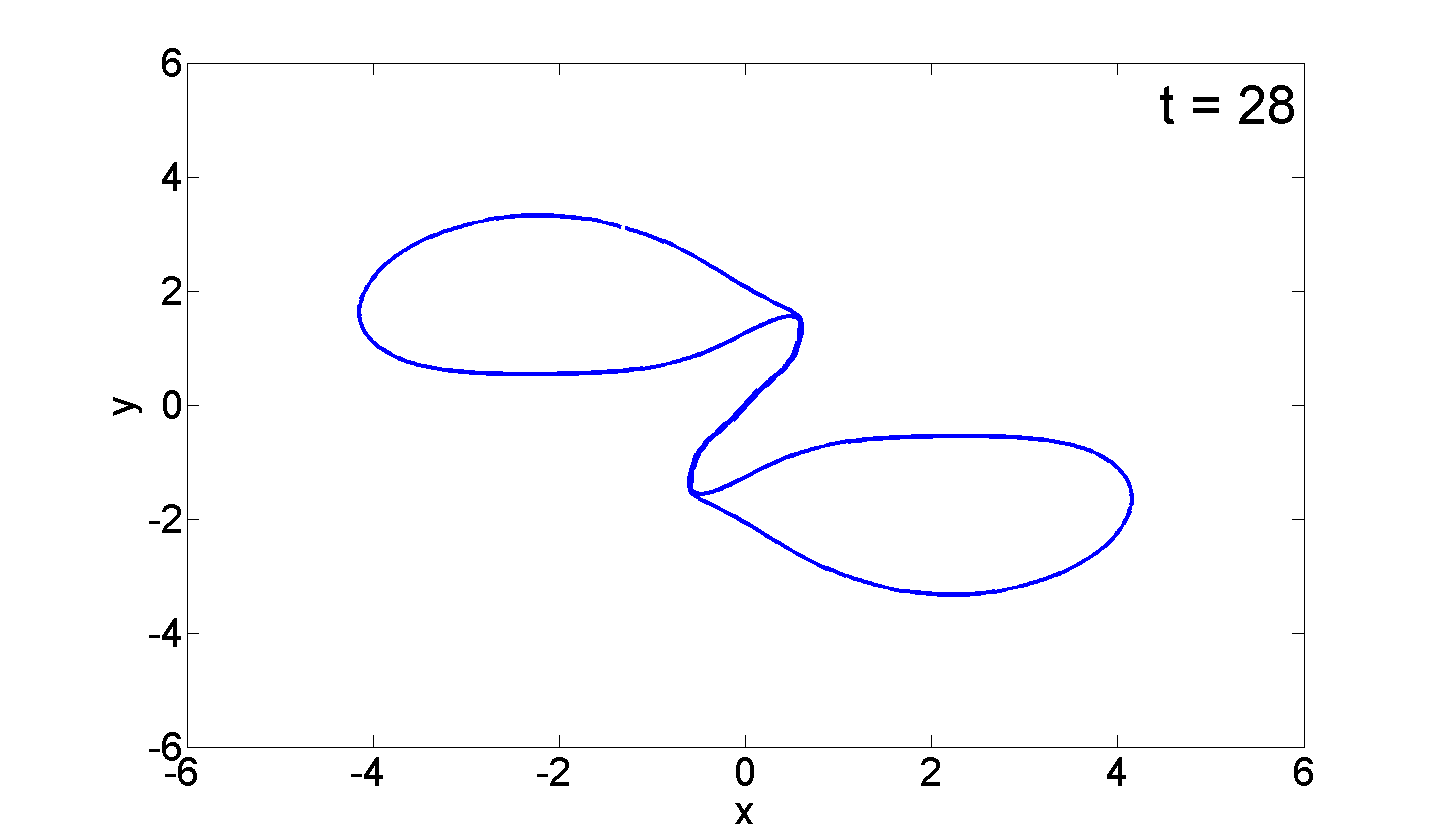}\\
\label{Elipse16}
\caption{Semiaxes 1-6,\ $ t=10,20,25,28$}
\end{figure}



\begin{figure}[h!]
\centering
\includegraphics[width=0.45\textwidth]{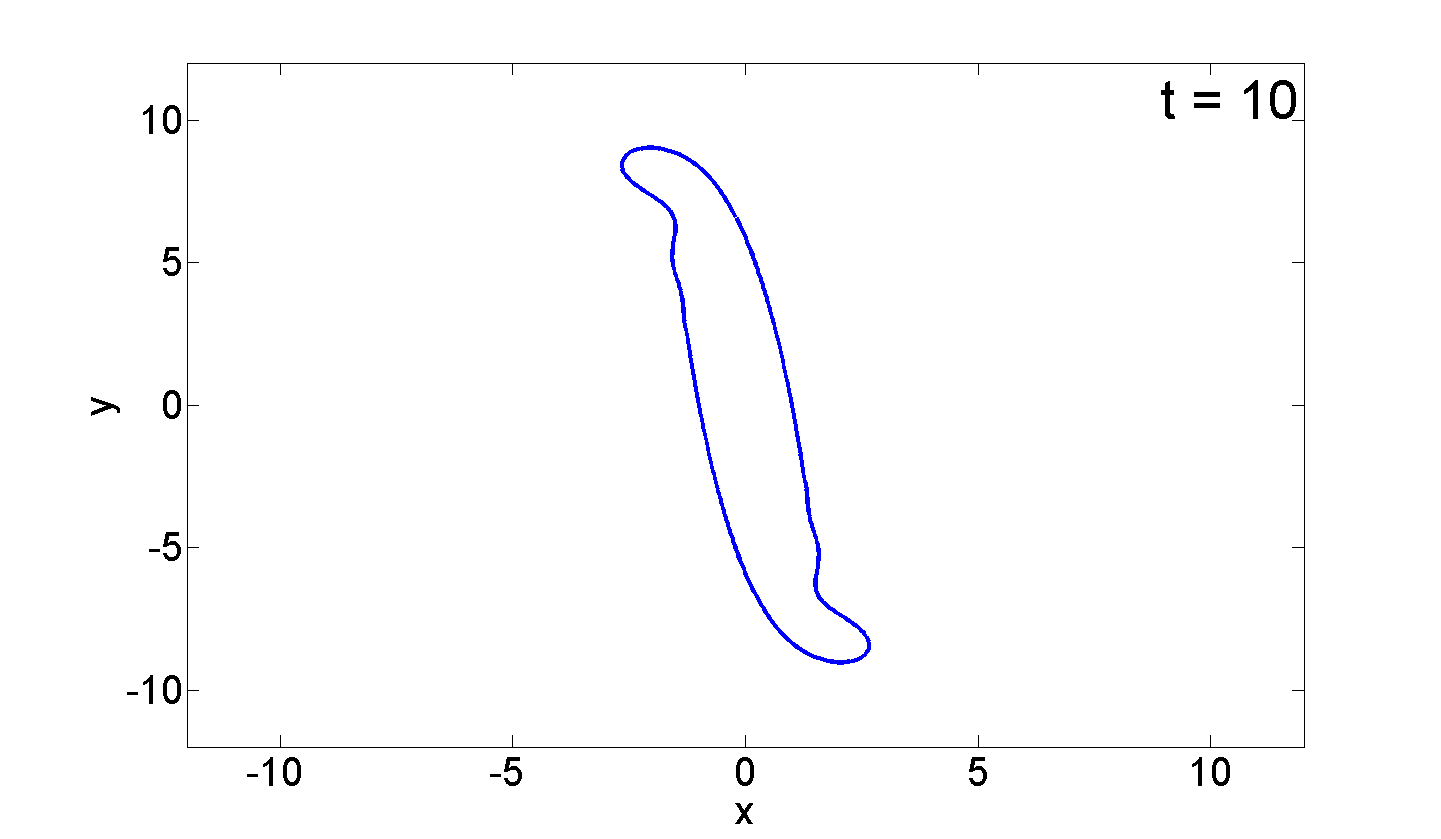}
\includegraphics[width=0.45\textwidth]{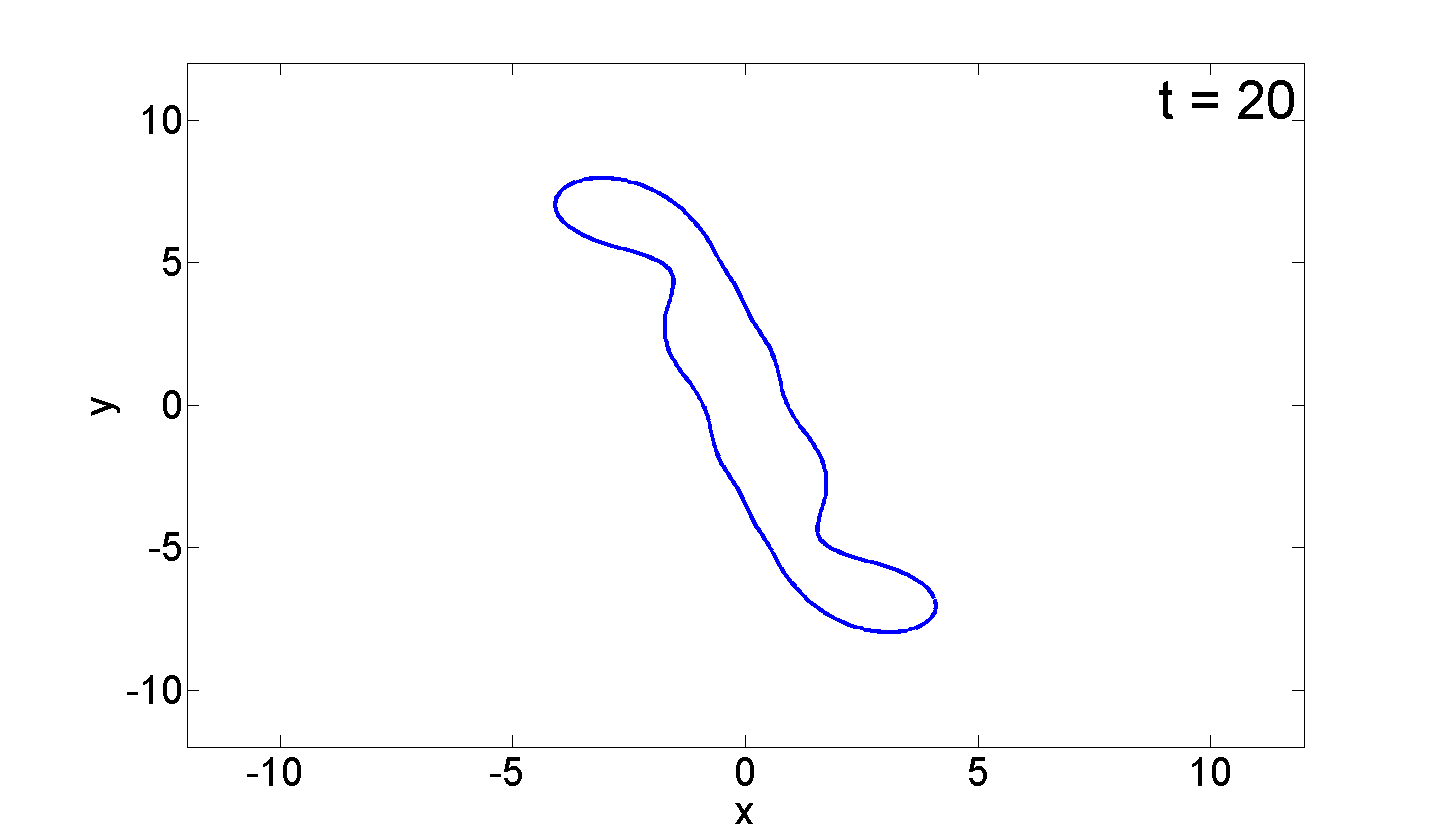}\\
\centering
\includegraphics[width=0.45\textwidth]{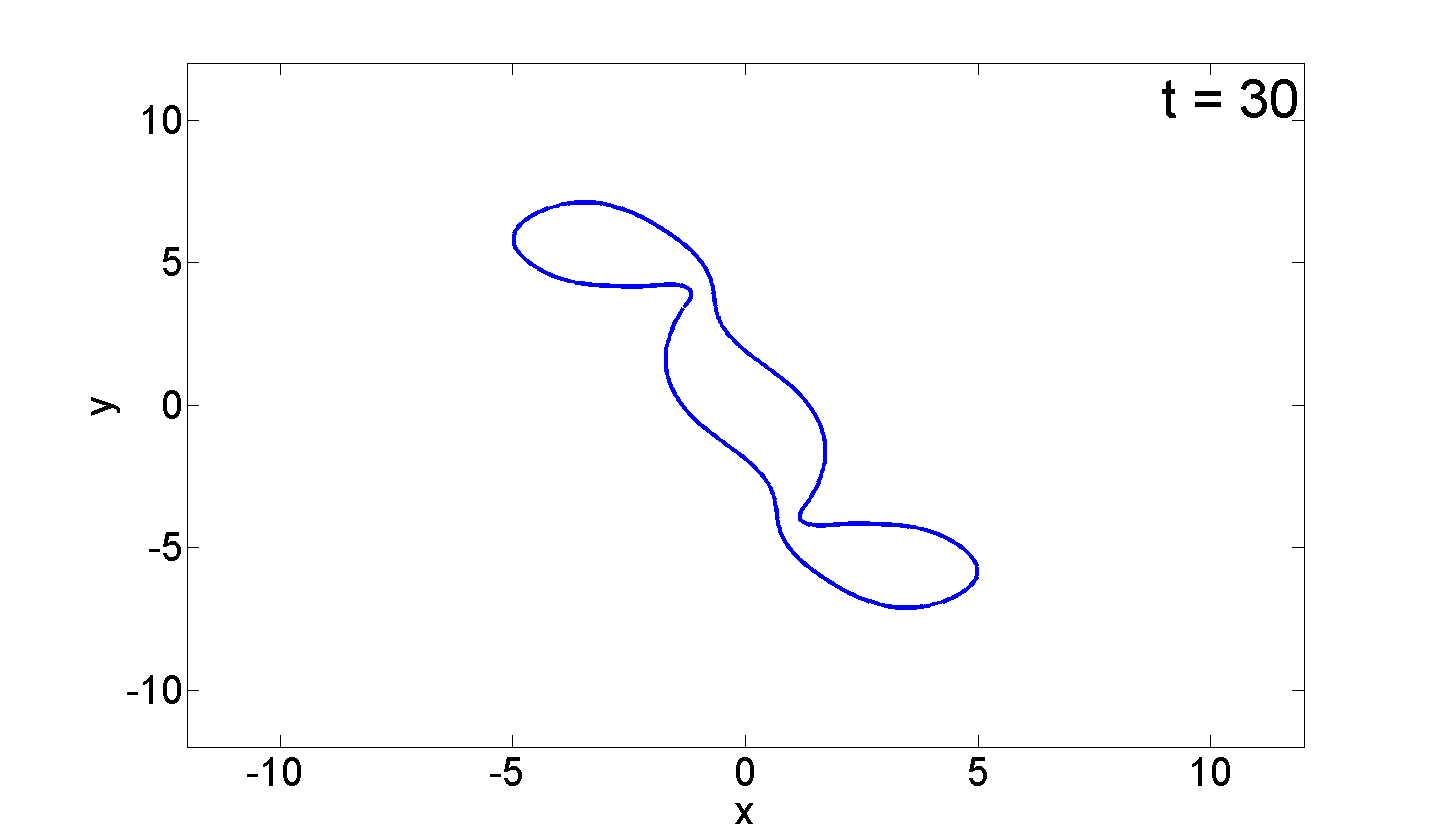}
\includegraphics[width=0.45\textwidth]{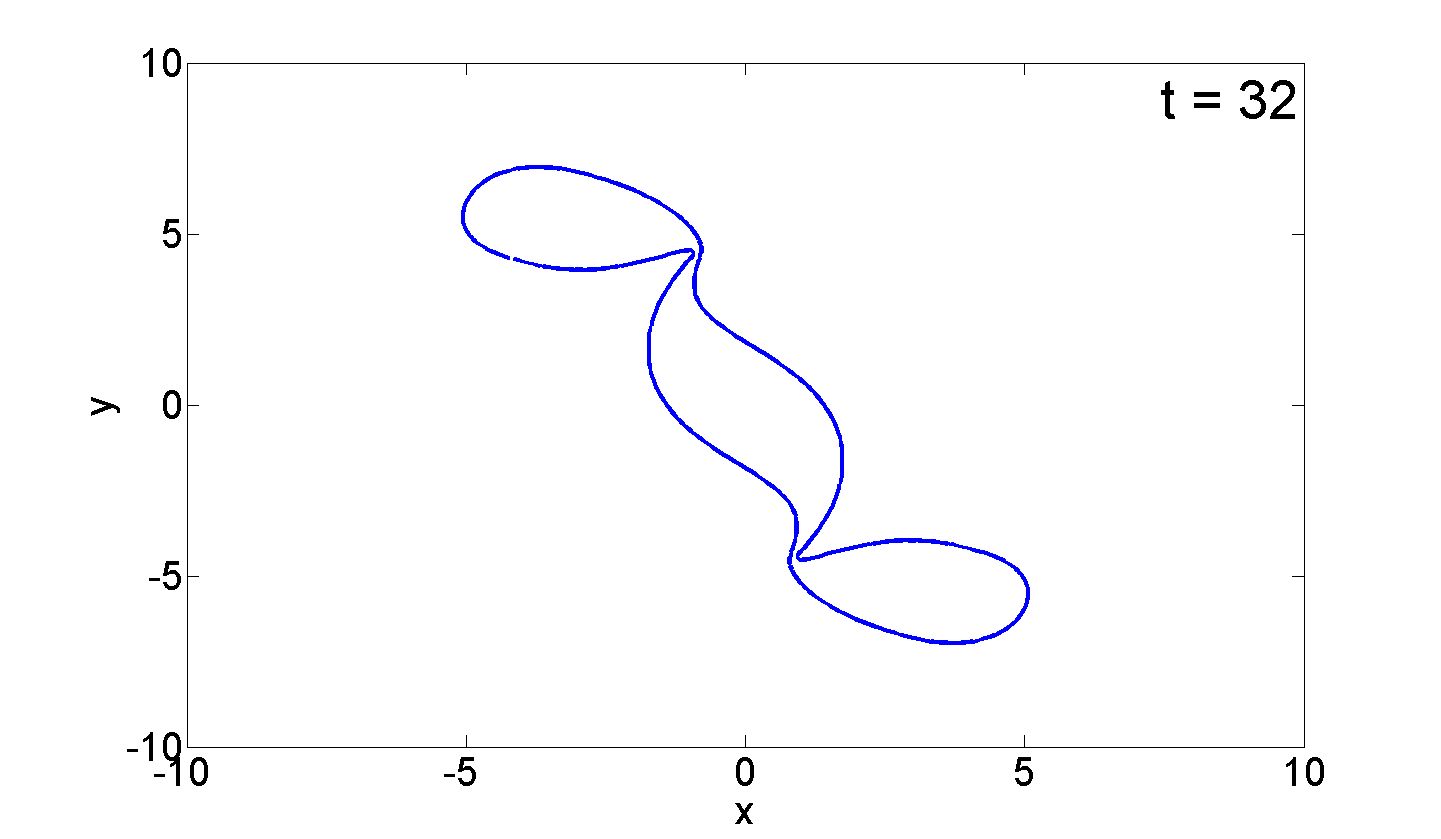}\\
\label{Elipse110}
\caption{Semiaxes 1-10,\ $ t=10,20,30,32$}
\end{figure}



The numerical simulations were performed using a contour dynamics algorithm. The interface is discretized by a 512 point grid and the interface evolves in time with a tangential component such that it maintains  $\vert \partial_x z \vert$ constant in space. In addition, derivatives are computed by a Fourier method with an exponential filter. Evolution in time is carried over by an adaptive step Runge-Kutta 45 pair.












\section{Rotating solutions}


	The purpose of this section is to show some differences between the rotating solutions of the vortex patch and the $\alpha$-patch problem.
	
	It is well known that the circular patch is a steady solution of all those problems, as can be easily seen by noticing that the operator $ (- \Delta)^{1 - \frac{\alpha}{2}}$ is a radial multiplier, and hence, it maps radial functions into radial functions. Since the velocity field is the skew gradient of $(- \Delta)^{1 - \frac{\alpha}{2}} \theta$, where $\theta $ denotes the step function of the patch, the velocity field is tangent to the level sets of $\theta$, leaving them invariant.

	
	In addition to the circular patch, for the vortex patch problem, there is a family of rotating solutions, the so called V-states. These are patches which only rotate around their center of mass with constant angular velocity and which have $n$-fold symmetry. The other remarkable property of V-states is that they have $n$-axes of symmetry. Some numerical results about the shape of V-states can be found in  \cite{Deem-Zabusky:vortex-waves-stationary} and \cite{Wu-Overman-Zabusky:steady-state-Euler-2d} and references therein. The existence of V-states can be proven by a bifurcation analysis from the circle. However, there is no explicit formula for the V-states with more than two axes of symmetry, which correspond to ellipses. The fact that the ellipses are rotating solutions for the vortex patch problem is a classical result \cite{Lamb:hydrodynamics}. However, we will show that ellipses are not rotating $\alpha$-patches, for $\alpha  > 0$.

\begin{thm}
The ellipses are not rotating patches for the equation \eqref{Ecuacion-alpha-patch} with $ 0<\alpha<2$.
\end{thm}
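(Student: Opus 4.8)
The plan is to use the geometric characterization of rotating patches: the region bounded by $z(x)$ rotates rigidly with angular velocity $\Omega$ about its center (which, after a rotation of the initial data, I place at the origin with the ellipse in the axis-aligned form $z(x)=(a\cos x,b\sin x)$, $a\neq b$) if and only if the normal velocity of the interface equals that of the rigid rotation, namely
\begin{align*}
\partial_t z(x,t)\cdot(\partial_x z(x))^\perp = \Omega\,(-z_2,z_1)\cdot(\partial_x z(x))^\perp \qquad\text{for all }x,
\end{align*}
where $(\partial_x z)^\perp=(\partial_x z_2,-\partial_x z_1)$; the tangential part of the motion is irrelevant because it is orthogonal to $(\partial_x z)^\perp$. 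For the ellipse the right-hand side equals $\Omega\frac{a^2-b^2}{2}\sin(2x)$, so it suffices to compute the left-hand side from \eqref{Ecuacion-alpha-patch} and show it is \emph{not} a constant multiple of $\sin(2x)$.

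First I would insert \eqref{Ecuacion-alpha-patch} into the left-hand side. The term carrying $\partial_x z(x)$ drops out because $\partial_x z(x)\cdot(\partial_x z(x))^\perp=0$, and the remaining numerator collapses by the elementary identity $\partial_x z(x-y)\cdot(\partial_x z(x))^\perp=ab\,\sin y$, which is independent of $x$. This is the main manipulation, and it reduces the normal velocity to
\begin{align*}
N(x):=\partial_t z\cdot(\partial_x z)^\perp = C_\alpha\, ab\int_0^{2\pi}\frac{\sin y}{|z(x)-z(x-y)|^{\alpha}}\,dy,
\end{align*}
with $C_\alpha$ the nonzero constant of \eqref{Ecuacion-alpha-patch}. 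Using $|z(x)-z(x-y)|^2=4\sin^2(y/2)\,[a^2\sin^2(x-\tfrac y2)+b^2\cos^2(x-\tfrac y2)]$ together with the substitution $u=y/2$, this becomes, up to a nonzero constant $K_\alpha$,
\begin{align*}
N(x)=K_\alpha\int_0^{\pi}\cos u\,\sin^{1-\alpha}u\;G(x-u)\,du,\qquad G(\phi)=\big(a^2\sin^2\phi+b^2\cos^2\phi\big)^{-\alpha/2}.
\end{align*}

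The decisive step is then to read off that $N$ carries a Fourier mode strictly higher than $\sin(2x)$. Since $G$ is even and $\pi$-periodic, I expand $G(\phi)=\sum_{k\ge0}g_k\cos(2k\phi)$; the convolution structure yields $N(x)=K_\alpha\sum_{k\ge1}(g_kB_k)\sin(2kx)$, where the cosine contributions vanish by the symmetry $u\mapsto\pi-u$ and $B_k=\int_0^{\pi}\cos u\,\sin^{1-\alpha}u\,\sin(2ku)\,du$. A rotating ellipse would force $g_kB_k=0$ for every $k\ge2$, and I would contradict this by showing that the coefficient of $\sin(4x)$ is nonzero. Here $g_2\neq0$ for $a\neq b$, and $B_2$ reduces via $\sin(4u)=4\sin u\cos u\cos(2u)$ to a combination of the Beta-type integrals $\int_0^\pi\sin^{p}u\,du$ which, after the standard recursion, collapses to a strictly positive multiple of $\int_0^\pi\sin^{2-\alpha}u\,du$ for every $\alpha\in(0,2)$.

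The hard part will be precisely this last verification that $g_2B_2\neq0$. The reduction to the displayed convolution is routine once the identity $\partial_x z(x-y)\cdot(\partial_x z(x))^\perp=ab\sin y$ is noticed, but concluding requires controlling two a priori independent quantities: the harmonic content $g_2$ of the purely geometric kernel $G$, which I would pin down by a short integration-by-parts argument exploiting that $G$ is a convex function of $\cos(2\phi)$ to give a definite sign, and the angular integral $B_2$, which I would evaluate explicitly. Once both are seen to be nonzero for $a\neq b$ and $0<\alpha<2$, $N$ cannot be proportional to $\sin(2x)$; hence no value of $\Omega$ satisfies the rotating-solution condition, and the ellipse is not a rotating patch.
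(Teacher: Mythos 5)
Your proposal is correct, and after the common reduction --- identical in substance to the paper's --- of taking the normal component of the velocity and observing that rigid rotation forces it to be proportional to $\sin(2x)$, your way of establishing non-proportionality is genuinely different from the paper's. The paper argues pointwise: its integral $F$ vanishes at $x=0$ and $x=\pi/2$, so it compares the limits of the ratio $F(x)/\sin(2x)$ at those two points and, after a trigonometric symmetrization, shows that $F'(0)+F'(\pi/2)$ equals a manifestly positive integral (using $\cos y>\sin y>0$ on $[0,\pi/4]$), so the ratio cannot be constant. You instead exploit the convolution structure $N(x)=K_\alpha\int_0^\pi \cos u\,\sin^{1-\alpha}u\,G(x-u)\,du$ and isolate a Fourier mode: the $\sin(4x)$ coefficient is $K_\alpha\, g_2B_2$, and both factors are nonzero. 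Your two pending verifications do go through. Writing $a^2\sin^2\phi+b^2\cos^2\phi=\frac{a^2+b^2}{2}\left(1-R\cos(2\phi)\right)$ with $0<|R|<1$ for $a\neq b$, the coefficient $g_2$ is strictly positive, either by the binomial series for $(1-Rs)^{-\alpha/2}$ (only even powers of $\cos$ contribute to this mode, each positively) or by your integration-by-parts route, which reduces $g_2$ to $\int f'(\cos\theta)\sin^2\theta\cos\theta\,d\theta>0$ using the monotonicity of $f'$ for the strictly convex $f(s)=(1-Rs)^{-\alpha/2}$; and the reduction $B_2=4\int_0^\pi\cos^2 u\,\sin^{2-\alpha}u\,\cos(2u)\,du$ together with the standard recursion for $\int_0^\pi\sin^p u\,du$ gives exactly $B_2=\frac{4\alpha}{(6-\alpha)(4-\alpha)}\int_0^\pi\sin^{2-\alpha}u\,du>0$ for $0<\alpha<2$. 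As for what each approach buys: the paper's is shorter and requires a single positivity check of one explicit one-dimensional integral, with no series manipulations; yours is more structural, shows that the normal velocity in fact carries higher harmonics beyond $\sin(2x)$, and the explicit factor $\alpha$ in $B_2$ makes the degeneration at $\alpha=0$ transparent, recovering consistency with the classical fact that ellipses are rotating solutions of the vortex patch problem --- a consistency check the paper's computation does not display.
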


\begin{proof}
Assuming that the center of mass of the patch is the origin, the condition that the patch rotates with constant angular velocity is equivalent to the existence of an $\Omega$ such that:
	
\begin{align*}
\partial_t z(x,t) = i \Omega z(x,t),
\end{align*}
	
for all $x \in [0, 2 \pi]$,	where $z(x,t)$ is a complex parametrization of the boundary.
	
Since the evolution of the patch is completely determined by the normal component of the velocity at the boundary, it is enough to check that the previous identity is not satisfied in the normal direction.


Plugging $z(x,t) = (R_1 \cos(x), R_2 \sin(x))$ into the integral formula for the velocity and taking the inner product of both sides of the previous equation with $\partial_x z(x)^\perp$, we are led, after some lengthy manipulations, to check that
\begin{align*}
F(x) = C_{R_1,R_2,\alpha} \int_0 ^{2 \pi} \frac{\sin(\frac{x-y}{2}) \cos(\frac{x-y}{2})}{(\sin^2(\frac{x - y}{2}))^\frac{\alpha}{2}(1 + R \cos(x + y))^\frac{\alpha}{2}} dy
\end{align*}

where $R = \frac{R_1 ^2 - R_2 ^2 }{R_1 ^2 + R_2 ^2 },0<R<1$, and

\begin{align*} G(x) =  \sin(2x)\end{align*}

are not proportional.\\


We will do so by showing that $H(x)= \frac{F(x)}{G(x)}$ has different limits at $x = 0$ and $x = \frac{\pi}{2}$. $F$ vanishes both at $x = 0$ and $x = \frac{\pi}{2}$ due to the fact that the integrand is odd with respect to those points, and hence, $H(x)$ is continuous at $x = 0$ and $x = \frac{\pi}{2}$ with limits

\begin{align*}
H(0 ) = \frac{F'(0)}{2}, \quad H\left(\frac{\pi}{2}\right) =  -\frac {F'(\frac{\pi}{2})}{2}.
\end{align*}
We will check that $H(0) - H(\frac{\pi}{2}) = \frac{1}{2} (F'(0) + F'( \frac{\pi} {2})) > 0$.\\


Taking derivatives with respect to $x$ and evaluating at $x = 0$ and $ x  = \frac{\pi}{2}$, we obtain


\footnotesize
\begin{align*}
& F'(0) + F' \left( \frac{\pi}{2}\right)   \\
& = C_{\alpha,R} \int_0 ^{2 \pi} \sin\left(\frac{y}{2}\right)^{1 - \alpha} \cos \left(\frac{y}{2}\right) \left ( \frac{R \sin( - y) }{ (1 - R \cos(-y))^{\frac{\alpha}{2} + 1}} - \frac{R \sin(  y) }{ (1 - R \cos(y))^{\frac{\alpha}{2} + 1}} \right)    \\
& + C_{\alpha,R} \int_0 ^{2 \pi} \sin\left(\frac{y}{2}\right)^{1 - \alpha} \cos \left(\frac{y}{2}\right) \left( \frac{R \sin( \pi - y) }{ (1 - R \cos( \pi -y))^{\frac{\alpha}{2} + 1}} - \frac{R \sin(\pi +  y) }{ (1 - R \cos( \pi + y))^{\frac{\alpha}{2} + 1}} \right) dy \\
= & C_{\alpha,R} \int_0  ^\pi  \sin\left(\frac{y}{2}\right)^{2 - \alpha} \cos \left(\frac{y}{2}\right)^2
 \left(\frac{1}{ (1 - R \cos(y))^{\frac{\alpha}{2} + 1}} - \frac{1 }{ (1 + R \cos(y))^{\frac{\alpha}{2} + 1}} \right)  dy\\
 = & C_{\alpha,R} \int_0 ^\frac{\pi}{2} \sin\left(\frac{y}{2}\right)^{2 - \alpha} \cos \left(\frac{y}{2}\right)^2\left(\frac{1}{ (1 - R \cos(y))^{\frac{\alpha}{2} + 1}} - \frac{1 }{ (1 + R \cos(y))^{\frac{\alpha}{2} + 1}}\right)dy    \\
 & - C_{\alpha,R}\int_0 ^\frac{\pi}{2} \cos\left(\frac{y}{2}\right)^{2 - \alpha} \sin \left(\frac{y}{2}\right)^2 \left(\frac{1}{ (1 - R \cos(y))^{\frac{\alpha}{2} + 1}} - \frac{1 }{ (1 + R \cos(y))^{\frac{\alpha}{2} + 1}} \right)dy   \\
= & C_{\alpha,R}\int_0 ^\frac{\pi}{2} \left(  \sin\left(\frac{y}{2}\right)^{2 - \alpha} \cos \left(\frac{y}{2}\right)^2 -  \cos\left(\frac{y}{2}\right)^{2 - \alpha} \sin \left(\frac{y}{2}\right)^2 \right) \left( \frac{1}{ (1 - R \cos(y))^{\frac{\alpha}{2} + 1}} - \frac{1 }{ (1 + R \cos(y))^{\frac{\alpha}{2} + 1}} \right)dy \\
= &  C_{\alpha,R}\int_0 ^\frac{\pi}{2} \sin\left(\frac{y}{2}\right)^{2 - \alpha} \cos\left(\frac{y}{2}\right)^{2 - \alpha} \left( \cos\left(\frac{y}{2}\right)^\alpha - \sin\left(\frac{y}{2}\right)^\alpha \right) \left( \frac{1}{ (1 - R \cos(y))^{\frac{\alpha}{2} + 1}} - \frac{1 }{ (1 + R \cos(y))^{\frac{\alpha}{2} + 1}} \right) dy,
\end{align*}
\normalsize
where the constant $C_{\alpha,R}$ might change from line to line. Finally, the last integral is strictly positive since $ \cos(x) > \sin(x) > 0 $ for $ 0 \leq x \leq \frac{\pi}{4}$.\\

\end{proof}

\section{Loss of convexity}
This section is devoted to prove the following theorem:

\begin{thm}
 For every $0 < \alpha < 2$, there exists a solution of the $\alpha$-patch equation \eqref{Ecuacion-alpha-patch} that begins convex, and after a finite time it is no longer convex. Moreover, there exists a solution of the vortex patch problem that has the same property.
\end{thm}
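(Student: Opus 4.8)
The plan is to convert convexity into the sign of a single scalar and then reduce the entire statement to the sign of one integral evaluated at the initial time. Writing the boundary as a counterclockwise parametrized simple closed curve $z(x,t)=(z_1,z_2)$, convexity is equivalent to nonnegativity of the signed curvature, that is, to
\[
w(x,t):=\partial_x z_1\,\partial_{xx}z_2-\partial_x z_2\,\partial_{xx}z_1\ \geq\ 0\qquad\text{for all }x.
\]
First I would choose explicit, analytic, symmetric initial data $z_0$ that is convex but only marginally so: a simply connected convex curve with a single flat point $x_0$ at which $w(x_0,0)=0$, while $w(\cdot,0)\geq 0$ elsewhere. Since $x_0$ is then a global minimum of $w(\cdot,0)$, one also has $\partial_x w(x_0,0)=0$, so the fate of $w$ near $(x_0,0)$ is dictated by its time derivative. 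The key reduction is the following: if
\[
\partial_t w(x_0,0)<0,
\]
then, because the solution is smooth (indeed analytic) in time by the local existence theory for $\alpha$-patches quoted in the introduction, we obtain $w(x_0,t)<0$ for all sufficiently small $t>0$. Thus the curve starts convex and fails to be convex immediately afterwards, and both assertions of the theorem follow from a single sign.

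Next I would produce a workable formula for $\partial_t w(x_0,0)$ by differentiating $w$ in time and inserting $\partial_t z$, $\partial_x\partial_t z$ and $\partial_{xx}\partial_t z$, which are read off from the evolution equation \eqref{Ecuacion-alpha-patch} when $0<\alpha<2$ and from \eqref{Ecuation-Vortex-Patch} for the vortex patch. Differentiating the integrand in $x$ and evaluating at $x=x_0$, $t=0$ produces a finite list of explicit singular integrals in $y$; the symmetry of the chosen profile should cancel several terms and leave a combination whose sign must be determined. At this point the argument becomes computer-assisted: I would enclose each integral rigorously by interval arithmetic and verify that the total is strictly negative.

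The hard part is the rigorous evaluation of these integrals. The kernel $|z(x_0)-z(x_0-y)|^{-\alpha}$ is singular at $y=0$, so before applying a validated quadrature I would split off a small neighbourhood of $y=0$, subtract the leading singular behaviour analytically — using that the relevant numerators vanish to first order in $y$ while the denominator behaves like $|y|^{\alpha}$ with $\alpha<2$, so that the integrand is integrable — and bound the integrable remainder by hand, applying the validated quadrature only where the integrand is smooth. A second difficulty is that the conclusion must hold for the whole continuum $\alpha\in(0,2)$, not for a single value: I would treat $\alpha$ as an interval variable and cover $(0,2)$ by finitely many subintervals, establishing negativity uniformly on each, with extra care as $\alpha\to 2$, where the singularity strengthens ($y^{1-\alpha}$ degenerates), and with the limit $\alpha\to 0$ handled by the separate computation for the logarithmic kernel of \eqref{Ecuation-Vortex-Patch}. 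Finally, verifying that $z_0$ is genuinely convex with equality $w(\cdot,0)=0$ only at $x_0$ is an auxiliary check that can be discharged either explicitly or again by interval arithmetic.
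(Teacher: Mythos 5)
Your overall strategy is the same as the paper's: take data with curvature vanishing at exactly one point and strictly positive elsewhere, reduce everything to the sign of the time derivative of the curvature at that point, and validate that sign by interval arithmetic, splitting off the singular region of the kernel and covering the parameter $\alpha$ by finitely many intervals. However, there are two genuine gaps. The first is the small-$\alpha$ regime. The quantity you must sign is $K_t=c_\alpha I(\alpha)$, where $c_\alpha$ is the constant in \eqref{Ecuacion-alpha-patch} and $I(\alpha)$ the kernel integral: as $\alpha\to 0^+$ one has $c_\alpha\to\infty$ while $I(\alpha)\to 0$ (at $\alpha=0$ the integrand degenerates to $z_x(x)-z_x(x-y)$, whose integral $2\pi z_x(x)$ is purely tangential, and $I(0)=0$ because the flat point is also a critical point of the curvature). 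Consequently, no finite covering of $(0,2)$ by $\alpha$-intervals can give a uniform sign near $\alpha=0$: any interval-arithmetic enclosure of $I$ over an $\alpha$-interval reaching down to $0$ contains $0$, and multiplying by the blowing-up $c_\alpha$ only makes the enclosure worse. Your remark that the limit $\alpha\to 0$ is ``handled by the separate computation for the logarithmic kernel'' disposes of $\alpha=0$ itself (the vortex patch, which is a different equation), but not of the open range of small positive $\alpha$, which the theorem requires. The paper's fix is precisely the missing idea: on a region $[0,\alpha_{cr}]$ it validates the sign of $\partial_\alpha I(\alpha)$ instead, and concludes the sign of $I(\alpha)$ for $0<\alpha\le\alpha_{cr}$ from $I(0)=0$ by integration in $\alpha$.

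The second gap is your insistence on the specific sign $\partial_t w(x_0,0)<0$. This is both unnecessary and, for a single initial datum, likely unattainable uniformly in $\alpha$: in the paper's computation the sign of $K_t$ actually changes with $\alpha$ (for $C=0.15$ it is negative on $[0,0.08]$ and positive on $[0.09,2)$). What suffices is $K_t\neq 0$: since the equations are time reversible, flipping the sign of the temperature jump $\theta_2-\theta_1$ (equivalently, running the flow backwards) converts ``was nonconvex in the immediate past'' into ``becomes nonconvex in the immediate future,'' so either sign yields the theorem. Without this remark your plan breaks whenever the validated sign comes out positive. Moreover, because a fixed datum can have $K_t=0$ at some crossover value of $\alpha$ (where no sign can be validated at all), the paper needs \emph{two} initial conditions, $C=0.15$ and $C=0.45$, whose unresolved $\alpha$-windows $(0.08,0.09)$ and $(0.09,0.10)$ are disjoint; a one-datum plan like yours must either get lucky or build in the same redundancy. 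Your treatment of the $y=0$ singularity and of $\alpha$ near $2$ (subtracting the leading odd singular term so the integrand stays bounded up to $\alpha=2$) is essentially the paper's extension $\tilde I(\alpha)$ and is fine as sketched.
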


We start with some technical lemmas:
\begin{lemma}
 Let
\begin{align*}
 k_C(x)  = &4\pi^2((\pi^4 - 3x^4)\cos(C-x) - x(\pi^2 - x^2)^2\sin(C-x)), \quad C \in \{0.15, 0.45\} \\
d_1(x)  = &-4\pi^2x \\
d_2(x)  = &-4\pi^2(\pi^4 - 3x^4) \\
d_3(x)  = &-8\pi^2 x (3\pi^6 - 10 \pi^4 x^2 + 3 \pi^2 x^4 + 6 x^6) \\
d_4(x)  = &-8\pi^2 (3 \pi^{10} - 6 \pi^8 x^2 - 58 \pi^6 x^4 + 132 \pi^4 x^6 - 45 \pi^2 x^8 - 30 x^{10}) \\
d_5(x)  = &-16 \pi^2 x(15 \pi^{12} - 170 \pi^{10}x^2 + 264 \pi^8 x ^4 + 300 \pi^6 x^6 - 765 \pi^4 x^8 + 270 \pi^2 x^{10} + 90 x^{12})\\
d_6(x)  = &-16 \pi^2 (15 \pi^{16} - 270 \pi^{14}x^2 - 1005 \pi^{12} x ^4 + 7102 \pi^{10} x^6 - 9645 \pi^8 x^8 + 930 \pi^{6} x^{10} \\
& + 8505 \pi^{4} x^{12} - 3150 \pi^{2} x^{14} - 630 x^{16})
\end{align*}

Then, the following inequalities are true:
\begin{align*}
 k_C(x) > 0 & \text{ for every } x \in [-\pi,\pi] & \\
 d_1(x) > 0 & \text{ for every } x \in [-\pi,-\pi + \epsilon] \text{ and } d_1(x) < 0 & \text{ for every } x \in [\pi-\epsilon,\pi]\\
 d_2(x) > 0 & \text{ for every } x \in [-\pi,-\pi+\epsilon] \cup [\pi-\epsilon,\pi] &\\
 d_3(x) > 0 & \text{ for every } x \in [-\pi,-\pi + \epsilon] \text{ and } d_3(x) < 0 & \text{ for every } x \in [\pi-\epsilon,\pi]\\
 d_4(x) > 0 & \text{ for every } x \in [-\pi,-\pi+\epsilon] \cup [\pi-\epsilon,\pi] &\\
 d_5(x) > 0 & \text{ for every } x \in [-\pi,-\pi + \epsilon] \text{ and } d_5(x) < 0 & \text{ for every } x \in [\pi-\epsilon,\pi]\\
 d_6(x) > 0 & \text{ for every } x \in [-\pi,-\pi+\epsilon] \cup [\pi-\epsilon,\pi] &\\
\end{align*}

where $\epsilon = 0.0078125 = \frac{1}{128}$.

\end{lemma}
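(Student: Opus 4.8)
The plan is to separate the six polynomial inequalities (the $d_i$) from the transcendental one ($k_C$) and to exploit parity to halve the work on the polynomials. Each $d_i$ has definite parity: $d_1,d_3,d_5$ are odd (each is $x$ times an even polynomial) while $d_2,d_4,d_6$ are even. For the odd ones, $d_i(-x)=-d_i(x)$, so ``$d_i>0$ on $[-\pi,-\pi+\epsilon]$'' is exactly equivalent to ``$d_i<0$ on $[\pi-\epsilon,\pi]$'', and it suffices to settle the sign on one of the two intervals. For the even ones, $d_i(-x)=d_i(x)$, so positivity on $[\pi-\epsilon,\pi]$ immediately yields positivity on $[-\pi,-\pi+\epsilon]$. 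Thus every $d_i$-claim reduces to determining the sign of a single explicit polynomial on the short interval $[\pi-\epsilon,\pi]$ of length $\epsilon=\tfrac1{128}$.

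On that interval I would substitute $x=\pi-s$ with $s\in[0,\epsilon]$ and enclose the resulting polynomial in $s$ by interval arithmetic; because $\epsilon$ is so small, the value at the endpoint controls the sign. Equivalently, one evaluates $d_i$ at the relevant endpoint (a rational multiple of a power of $\pi$), bounds $\sup_{[\pi-\epsilon,\pi]}|d_i'|$ by a crude explicit constant $M_i$, and checks that $M_i\,\epsilon<|d_i|$ at that endpoint, which rules out any root of $d_i$ in the interval and hence fixes the sign. Should this margin ever be too tight to conclude in one step, I would bisect $[\pi-\epsilon,\pi]$ into finitely many subintervals and repeat; the procedure terminates after finitely many sign-definite evaluations.

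The genuinely harder statement is $k_C>0$ on all of $[-\pi,\pi]$, since $k_C$ is transcendental and the interval is not short. Writing $k_C(x)=4\pi^2P(x)$ with $P(x)=A(x)\cos(C-x)-B(x)\sin(C-x)$, $A(x)=\pi^4-3x^4$ and $B(x)=x(\pi^2-x^2)^2$, I would recast positivity as a phase condition: since $A$ vanishes only at $x=\pm\pi\,3^{-1/4}$ and $B$ only at $x=0,\pm\pi$, the two have no common zero, so $\rho(x)=\sqrt{A(x)^2+B(x)^2}>0$ throughout and $P(x)=\rho(x)\cos\!\big(C-x+\varphi(x)\big)$ with $\varphi=\operatorname{atan2}(B,A)$ continuous on $[-\pi,\pi]$. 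Positivity of $k_C$ is then equivalent to the phase $C-x+\varphi(x)$ remaining within a single branch $(-\tfrac\pi2+2k\pi,\tfrac\pi2+2k\pi)$ across the interval, a condition I would verify by enclosing the phase on a subdivision of $[-\pi,\pi]$, using validated enclosures of $\cos(C-x)$ and $\sin(C-x)$ on each subinterval.

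The main obstacle is the behaviour near the zeros of $A$ and of $B$: away from them one of the two terms of $P$ dominates and a coarse interval evaluation is already sign-definite, but near $x=\pm\pi\,3^{-1/4}$ and near $x=0,\pm\pi$ the two terms compete and a naive enclosure of $P$ may contain $0$. There the subdivision must be refined (or the phase reformulation used, which stays well conditioned precisely because $\rho>0$) until each subinterval yields a strictly positive enclosure. Since all of these amount to finitely many rigorous interval computations, the verification can be carried out in interval arithmetic with the C-XSC library, in line with the methodology used elsewhere in the paper.
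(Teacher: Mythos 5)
Your strategy coincides with the paper's own proof: the paper validates each sign claim by rigorous interval arithmetic, recursively bisecting any subinterval on which the enclosure is not sign-definite (down to a minimum width of $2\cdot 10^{-10}$), and your two preprocessing devices are sound but inessential additions to that same scheme --- the parity reduction is legitimate ($d_1,d_3,d_5$ are odd, $d_2,d_4,d_6$ are even, and the claimed sign pattern is parity-consistent), and the amplitude--phase rewriting of $k_C$ is well defined because $A(x)=\pi^4-3x^4$ and $B(x)=x(\pi^2-x^2)^2$ have no common zero, so $\rho=\sqrt{A^2+B^2}>0$ throughout.

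There is, however, one point that affects your verification (and any literal execution of the paper's): for $d_6$ \emph{as printed} the claim is false, so your procedure, run faithfully, will refute it rather than prove it. Indeed, at $x=\pi$ the bracket equals $(15-270-1005+7102-9645+930+8505-3150-630)\,\pi^{16}=1852\,\pi^{16}$, hence the printed $d_6(\pi)=-16\pi^2\cdot 1852\,\pi^{16}<0$, contradicting $d_6>0$ on $[\pi-\epsilon,\pi]$. The discrepancy is a sign typo in the $\pi^6x^{10}$ coefficient. The $d_k$ are meant to be the numerators of $\partial_x^k z_1$ (this is exactly what Corollary \ref{corbddderivatives} uses), and those numerators obey the recurrence $d_{k+1}=d_k'\,(\pi^2-x^2)^2+d_k\,x\bigl(4k(\pi^2-x^2)-2\pi^2\bigr)$; carrying it out gives $-930\,\pi^6x^{10}$, not $+930\,\pi^6x^{10}$, all other printed coefficients being correct. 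As a consistency check, at $x=\pi$ the recurrence collapses to $d_{k+1}(\pi)=-2\pi^3 d_k(\pi)$, so $d_6(\pi)=(-2\pi^3)^5(-4\pi^3)=128\,\pi^{18}>0$, matching the corrected bracket value $-8\,\pi^{16}$. With this correction your argument (and the paper's) goes through; note, though, that the corrected margin at $x=\pi$ (a bracket of $-8$ against individual coefficients in the thousands) is thin enough that your one-step test comparing the endpoint value against a crude derivative bound $M_i\epsilon$ will not conclude for $d_6$, so the bisection fallback you mention is genuinely needed there.
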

\begin{proof}
 The proof is computer-assisted and the codes can be found in the supplementary material. To compute the positiveness or negativeness, the program checks recursively halving the interval that is validating, until it either achieves a sign condition or the interval is less than a minimum size (in our case $2 \cdot 10^{-10}$). The program passed all the checks without problems.
\end{proof}

From the previous lemma follow the next corollaries:
\begin{cor}
\label{coronlyzerok}
 Let
\begin{align*}
 z_1(x) & = 2 e^{1- \frac{1}{1-\left(\frac{x}{\pi}\right)^{2}}} - 1\\
z_2(x) & = \sin(x-C),
\end{align*}
be defined on $[-\pi,\pi]$ and let $C = 0.45$ or $C = 0.15$. Then the curvature
\begin{align*}
 K(x) = (-\partial_{xx}z_1(x) \partial_{x}z_2(x) + \partial_{xx} z_2(x) \partial_{x}z_1(x))/(\partial_{x}z_1(x)^2 + \partial_{x}z_2(x)^2)^{3/2}
\end{align*}
 is only zero at $x = \pi$.
\end{cor}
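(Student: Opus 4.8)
The plan is to reduce the statement to the sign of the numerator of $K$ and then to recognize that this numerator is, up to a strictly positive factor, exactly the function $k_C$ of the preceding lemma.

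First I would observe that the denominator $(\partial_x z_1^2 + \partial_x z_2^2)^{3/2}$ never vanishes on $[-\pi,\pi]$, so that $K(x) = 0$ if and only if the numerator
\[
N(x) := -\partial_{xx}z_1(x)\,\partial_x z_2(x) + \partial_{xx}z_2(x)\,\partial_x z_1(x)
\]
vanishes. To justify the regularity I would note that $\partial_x z_2(x) = \cos(x-C)$ vanishes only at $x = C \pm \tfrac{\pi}{2}$, while on $(-\pi,\pi)$ the derivative $\partial_x z_1(x) = -4\pi^2 x\, e^{g(x)}/(\pi^2-x^2)^2$ vanishes only at $x=0$; since these zero sets are disjoint and $\partial_x z_2(0) = \cos C \neq 0$, the two derivatives are never simultaneously zero in the interior, and at the endpoints $\partial_x z_2(\pm\pi) = \cos(\pi - C)\neq 0$.

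Next I would carry out the differentiation. Writing $z_1 = 2e^{g} - 1$ with $g(x) = 1 - \pi^2/(\pi^2 - x^2)$, one has $\partial_x z_1 = 2e^{g}g'$ and $\partial_{xx}z_1 = 2e^{g}(g'^2 + g'')$, where a direct computation gives
\[
g'(x) = \frac{-2\pi^2 x}{(\pi^2 - x^2)^2}, \qquad g'(x)^2 + g''(x) = \frac{-2\pi^2(\pi^4 - 3x^4)}{(\pi^2 - x^2)^4}.
\]
Substituting these together with $\partial_x z_2 = \cos(x-C)$ and $\partial_{xx}z_2 = -\sin(x-C)$ into $N$, and using the parities $\cos(x-C) = \cos(C-x)$ and $\sin(x-C) = -\sin(C-x)$ to collect terms, the whole expression should collapse, for $x \in (-\pi,\pi)$, to
\[
N(x) = \frac{e^{g(x)}}{(\pi^2 - x^2)^4}\, k_C(x),
\]
with $k_C$ exactly as in the lemma. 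I expect this algebraic collapse — checking that the trigonometric terms reorganize precisely into the polynomial coefficients $\pi^4 - 3x^4$ and $x(\pi^2 - x^2)^2$ that appear in $k_C$ — to be the main computational step of the proof.

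Finally I would conclude. On the open interval $(-\pi,\pi)$ the factors $e^{g(x)}$ and $(\pi^2 - x^2)^4$ are strictly positive, and the lemma gives $k_C(x) > 0$; hence $N(x) > 0$ and $K(x) \neq 0$ there. At the endpoints the bump $z_1$ is flat, so $\partial_x z_1(\pm\pi) = \partial_{xx}z_1(\pm\pi) = 0$ (here the factored form degenerates into a $0/0$ indeterminacy and must be bypassed by evaluating $N$ directly), which yields $N(\pm\pi) = 0$ and thus $K(\pm\pi) = 0$. Since $z(-\pi) = z(\pi) = (-1, \sin C)$, the two endpoints describe the same point of the closed curve, and therefore $K$ vanishes only at $x = \pi$, as claimed.
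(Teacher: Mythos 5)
Your proposal is correct and takes essentially the same route as the paper: show the denominator never vanishes, factor the numerator as a strictly positive multiple of $k_C(x)$, and invoke the lemma, with the flatness of $z_1$ at $\pm\pi$ accounting for the single zero at the identified endpoint. In fact your factorization $N(x) = e^{g(x)}k_C(x)/(\pi^2-x^2)^4$ is slightly more precise than the paper's one-line statement, which omits the (harmless, strictly positive on the open interval) factor $(\pi^2-x^2)^{-4}$.
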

\begin{proof}
 The proof follows from the fact that the denominator of $K(x)$ is never zero and the numerator is equal to $k_C(x)e^{1- \frac{1}{1-\left(\frac{x}{\pi}\right)^{2}}}$.
\end{proof}

\begin{cor}
\label{corbddderivatives}
 Let $z_1(x), z_2(x)$ be defined as in the previous corollary and let $\epsilon = \frac{1}{128}$. Then, for every $x \in [-\pi,-\pi + \epsilon]$, $\partial_x^k z_1(x)$ belongs to the convex hull of $\partial_x^{k}z_1(-\pi)$ and $\partial_x^{k}z_1(-\pi+\epsilon)$ and  for every $x \in [\pi-\epsilon,\pi]$, $\partial_x^k z_1(x)$ belongs to the convex hull of $\partial_x^{k} z_1(\pi-\epsilon)$ and $\partial_x^{k} z_1(\pi)$ when $k=0,1,2,3,4,5$.
\end{cor}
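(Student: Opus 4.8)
The plan is to show directly that $\partial_x^k z_1(x)$ is monotone on each of the two boundary intervals $[-\pi,-\pi+\epsilon]$ and $[\pi-\epsilon,\pi]$, for each order $k=0,1,\dots,5$. Monotonicity on an interval immediately implies that the function takes values only in the convex hull of its two endpoint values, which is precisely the assertion. Since $z_1(x) = 2e^{1 - 1/(1-(x/\pi)^2)} - 1$ is even, it suffices to handle one side, say $[\pi-\epsilon,\pi]$, and then transfer the result to $[-\pi,-\pi+\epsilon]$ by the symmetry $\partial_x^k z_1(-x) = (-1)^k \partial_x^k z_1(x)$ (reflection preserves convex hulls and merely relabels the endpoints). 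Thus the whole statement reduces to a sign condition on $\partial_x^{k+1} z_1$ on a single small interval.

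First I would compute the derivatives. Writing $g(x) = 1 - \tfrac{1}{1-(x/\pi)^2}$, we have $z_1 = 2e^{g} - 1$, so every $\partial_x^k z_1$ is $e^{g}$ times a rational function of $x$ whose denominator is a power of $(\pi^2 - x^2)$. The key observation, which ties this corollary to the preceding lemma, is that the \emph{numerators} of these rational factors, after clearing common powers of $(\pi^2-x^2)$ and of $\pi$, are exactly (up to positive constant multiples) the polynomials $d_1,\dots,d_6$ appearing in the lemma. More precisely, I would verify by an explicit (if tedious) differentiation that
\begin{align*}
\partial_x^{k} z_1(x) = e^{g(x)} \cdot \frac{d_k(x)}{(\text{positive power of } (\pi^2-x^2))},
\end{align*}
so that the sign of $\partial_x^{k} z_1$ on the relevant interval coincides with the sign of $d_k$ there. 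The lemma then supplies exactly the needed sign data: $d_k > 0$ on $[-\pi,-\pi+\epsilon]$ for all $k$, while on $[\pi-\epsilon,\pi]$ one has $d_k<0$ for odd $k$ and $d_k>0$ for even $k$. In either case $\partial_x^{k} z_1$ has a constant sign on each subinterval, hence $\partial_x^{k-1} z_1$ is strictly monotone there, giving the convex-hull containment for $\partial_x^{k-1} z_1$.

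The remaining point is to organize the indexing so that the six polynomials $d_1,\dots,d_6$ deliver monotonicity for all of $k=0,1,2,3,4,5$. Since the containment for $\partial_x^{k} z_1$ needs the sign of $\partial_x^{k+1} z_1$, and $\partial_x^{k+1} z_1$ is governed by $d_{k+1}$, the lemma's six polynomials control the monotonicity of $\partial_x^{k} z_1$ for $k=0,\dots,5$, with the understanding that $d_{k+1}$ is (up to a positive factor) the cleared numerator of $\partial_x^{k+1} z_1$. I would state this correspondence explicitly and note that the endpoint $x=\pi$ (where $e^{g}\to 0$) is handled by continuity, the convex hull being a closed interval. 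The main obstacle is purely bookkeeping: carrying out the iterated differentiation of $e^{g(x)}$ correctly and checking that each cleared numerator matches the stated $d_k$ up to a positive constant, a computation whose verification is in fact what the computer-assisted Lemma certifies. Once that match is confirmed, the corollary follows at once from the sign statements in the Lemma together with the evenness of $z_1$.
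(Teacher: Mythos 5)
Your proposal is correct and follows essentially the same route as the paper: the paper's proof consists precisely of the observation that $\partial_x^{k} z_1(x) = d_k(x)\, e^{1-\frac{1}{1-(x/\pi)^2}}/(x^2-\pi^2)^{2k}$, so that the sign conditions on $d_1,\dots,d_6$ from the computer-assisted lemma give a constant sign for $\partial_x^{k+1}z_1$ on each boundary interval, hence monotonicity of $\partial_x^{k}z_1$ for $k=0,\dots,5$ and the convex-hull containment. Your additional remarks (the evenness reduction to one side, the continuity argument at $x=\pm\pi$, and the explicit indexing $k\mapsto d_{k+1}$) merely spell out details the paper leaves implicit in its one-line proof.
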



\begin{proof}
 The proof is immediate after noticing that
\begin{align*}
\partial_x^{k} z_1(x) = d_k(x) e^{1- \frac{1}{1-\left(\frac{x}{\pi}\right)^{2}}} / (x^2 - \pi^2)^{2k}.
\end{align*}
\end{proof}

\begin{proofthm}
 The proof of the theorem is computer-assisted and can be found in the supplementary material. Let us explain the necessary reductions of the problem to a computationally tractable one.

The general strategy is the following: we will start with an initial condition such that the curvature is zero at exactly one point $x$ and positive otherwise. Then, we will rigorously prove that the derivative of the curvature at that point is different than zero. Since the $\alpha$-patch equations are time reversible, we can go a sufficiently small time forward and backward so that we are able to show that there exists a solution that starts being convex and loses the convexity after a finite time by choosing the initial jump of temperatures across the boundary to be either $+2 \pi$ or $- 2\pi$.

\begin{rem}
By the time reversibility of the equations, this also shows that there are initial conditions that are non convex and in finite time they become convex.
\end{rem}

More precisely, we will show the following: if $z_1$ and $z_2$ are defined as in Corollary \ref{coronlyzerok}, then:

\begin{itemize}
 \item If $C = 0.15$, then $K_t(x) < 0$ if $\alpha \in [0,0.08]$ and $K_t(x) > 0$ if $\alpha \in [0.09,2.00)$.
 \item If $C = 0.45$, then $K_t(x) < 0$ if $\alpha \in [0,0.09]$ and $K_t(x) > 0$ if $\alpha \in [0.10,2.00)$.
\end{itemize}

We now detail the formulas needed for the computation of the time derivative of the curvature.

\begin{align*}
 \partial_t \left(\frac{\langle z_{xx}(x), z_{x}^{\perp}(x)\rangle}{|z_{x}|^3}\right)
& = - \left(\frac{\langle \partial_t z_{x}(x), z_{xx}^{\perp}(x)\rangle}{|z_{x}|^3}\right)
 + \left(\frac{\langle \partial_t z_{xx}(x), z_{x}^{\perp}(x)\rangle}{|z_{x}|^3}\right) \\
& + \underbrace{\langle z_{xx}(x), z_{x}^{\perp}(x)\rangle \partial_t \left(\frac{1}{|z_{x}|^3}\right)}_{0},
\end{align*}

where the last term is zero since the curvature is zero at the point at which we are evaluating the expression. For the vortex patch equation, taking derivatives from the contour equation \eqref{Ecuation-Vortex-Patch} and integrating by parts yield

\begin{align*}
z_t(x) & = \int \log(|z(x)-z(x-y)|)z_{x}(x-y)dy \\
z_{xt}(x) & = \int \log(|z(x)-z(x-y)|)z_{xx}(x-y)dy\\
& + \int \frac{\langle z(x)-z(x-y),z_{x}(x)-z_{x}(x-y) \rangle}{|z(x)-z(x-y)|^{2}} z_x(x-y)dy \\
& = -\int \frac{\langle z(x)-z(x-y),z_{x}(x-y) \rangle}{|z(x)-z(x-y)|^{2}} (z_x(x) - z_x(x-y))dy \\
& + \int \frac{\langle z(x)-z(x-y),z_{x}(x)-z_{x}(x-y) \rangle}{|z(x)-z(x-y)|^{2}} z_x(x-y)dy \\
z_{xxt}(x) & = -\int \frac{\langle z(x)-z(x-y),z_{x}(x-y) \rangle}{|z(x)-z(x-y)|^{2}} (z_{xx}(x) - z_{xx}(x-y))dy \\
& + 2\int \frac{\langle z(x)-z(x-y),z_{x}(x)-z_{x}(x-y) \rangle}{|z(x)-z(x-y)|^{2}} z_{xx}(x-y)dy \\
& + \int \frac{|z_x(x)-z_{x}(x-y)|^{2}}{|z(x)-z(x-y)|^{2}} z_{x}(x-y)dy \\
& + \int \frac{\langle z(x)-z(x-y),z_{xx}(x)-z_{xx}(x-y) \rangle}{|z(x)-z(x-y)|^{2}} z_{x}(x-y)dy \\
& - 2\int \frac{(\langle z(x)-z(x-y),z_{x}(x)-z_{x}(x-y) \rangle)^{2}}{|z(x)-z(x-y)|^{4}} z_{x}(x-y)dy, \\
\end{align*}

whereas for the $\alpha$-patch the derivatives are given by

\begin{align*}
 z_t(x) & = c_{\al} \int \frac{z_{x}(x) - z_{x}(x-y)}{|z(x)-z(x-y)|^{\al}}dy \\
z_{xt}(x) & = c_{\al} \int \frac{z_{xx}(x) - z_{xx}(x-y)}{|z(x)-z(x-y)|^{\al}}dy \\
& - c_{\al} \al \int \frac{z_{x}(x) - z_{x}(x-y)}{|z(x)-z(x-y)|^{2+\al}}\langle z_{x}(x) - z_{x}(x-y), z(x) - z(x-y)\rangle dy \\
z_{xxt}(x) & = c_{\al} \int \frac{z_{xxx}(x) - z_{xxx}(x-y)}{|z(x)-z(x-y)|^{\al}}dy \\
& - 2 c_{\al} \al \int \frac{z_{xx}(x) - z_{xx}(x-y)}{|z(x)-z(x-y)|^{2+\al}}\langle z_{x}(x) - z_{x}(x-y), z(x) - z(x-y)\rangle dy \\
& - c_{\al} \al \int \frac{z_{x}(x) - z_{x}(x-y)}{|z(x)-z(x-y)|^{2+\al}}|z_{x}(x) - z_{x}(x-y)|^{2} dy \\
& + c_{\al} \al (\al + 2) \int \frac{z_{x}(x) - z_{x}(x-y)}{|z(x)-z(x-y)|^{4+\al}}(\langle z_{x}(x) - z_{x}(x-y), z(x) - z(x-y)\rangle)^{2} dy \\
& - c_{\al} \al \int \frac{z_{x}(x) - z_{x}(x-y)}{|z(x)-z(x-y)|^{2+\al}}\langle z_{xx}(x) - z_{xx}(x-y), z(x) - z(x-y)\rangle dy. \\
\end{align*}


 Thus, in this case we need to validate the sign of a quantity which can be represented as
\begin{align*}
 c_{\al} \int F(\al,x) dx = c_{\al} I(\al).
\end{align*}

The problem that we encounter is that for small values of $\alpha$ tending to zero, $c_{\al}$ tends to $\infty$ and $I(\al)$ tends to zero. To overcome this difficulty, we will distinguish four regimes of $\alpha$:
\begin{enumerate}
 \item $\alpha = 0$: the vortex patch equation.
\item $\alpha \leq \alpha_{cr}$: the small $\alpha$ region.
\item $\alpha_{cr} \leq \alpha \leq \alpha_{br}$: the big $\alpha$ region.
\item $\alpha_{br} \leq \alpha  < 2$: the very big $\alpha$ region.
\end{enumerate}

In the first case, we are left to validate the sign of an integral which is independent of $\alpha$. We will show that this sign is negative for the functions considered. In the second case, using the fact that for $\alpha$ strictly positive, $0 < c_{\al} < \infty$ and that $I(0) = 0$, it is enough to show that $DI(\alpha) = \partial_{\alpha}I(\alpha) < 0$. This is the sign condition that we will validate in the second region. In the third and fourth regions, we will validate $I(\al) < 0$ or $I ( \al) > 0$ depending on the range of $\alpha$. We chose $\alpha_{cr} = 0.04$ and $\alpha_{br} = 1.95$. We remark that both $I(\alpha)$ and $DI(\alpha)$ have the same sign if multiplied by $|z_x|^3$. Therefore, in order to avoid the division, we will compute the values of $I(\alpha)|z_x|^3$ and $DI(\alpha)|z_x|^3$.

The integral that we need to calculate in the small $\alpha$ region is

\begin{align*}
- \left(\frac{\langle \partial_{\alpha} \partial_t z_{x}(x), z_{xx}^{\perp}(x)\rangle}{c_{\alpha}|z_{x}|^3}\right)
 + \left(\frac{\langle \partial_{\alpha} \partial_t z_{xx}(x), z_{x}^{\perp}(x)\rangle}{c_{\alpha}|z_{x}|^3}\right) = DI(\alpha)
\end{align*}
where the derivatives with respect to $\alpha$ are given by

\begin{align*}
&\partial_{\alpha} \left(\frac{z_{xt}(x)}{c_{\alpha}}\right)  = - \int \frac{z_{x}(x) - z_{x}(x-y)}{|z(x)-z(x-y)|^{2+\al}}\langle z_{x}(x) - z_{x}(x-y), z(x) - z(x-y)\rangle dy \\
& - \int \frac{z_{xx}(x) - z_{xx}(x-y)}{|z(x)-z(x-y)|^{\al}}\log(|z(x)-z(x-y)|) dy \\
& + \al \int \frac{z_{x}(x) - z_{x}(x-y)}{|z(x)-z(x-y)|^{2+\al}}\langle z_{x}(x) - z_{x}(x-y), z(x) - z(x-y)\rangle \log(|z(x)-z(x-y)|) dy \\
& \partial_{\alpha} \left(\frac{z_{xxt}(x)}{c_{\alpha}}\right)   =  - 2  \int \frac{z_{xx}(x) - z_{xx}(x-y)}{|z(x)-z(x-y)|^{2+\al}}\langle z_{x}(x) - z_{x}(x-y), z(x) - z(x-y)\rangle dy \\
& -   \int \frac{z_{x}(x) - z_{x}(x-y)}{|z(x)-z(x-y)|^{2+\al}}|z_{x}(x) - z_{x}(x-y)|^{2} dy \\
& +  (2 \al + 2) \int \frac{z_{x}(x) - z_{x}(x-y)}{|z(x)-z(x-y)|^{4+\al}}(\langle z_{x}(x) - z_{x}(x-y), z(x) - z(x-y)\rangle)^{2} dy \\
& -   \int \frac{z_{x}(x) - z_{x}(x-y)}{|z(x)-z(x-y)|^{2+\al}}\langle z_{xx}(x) - z_{xx}(x-y), z(x) - z(x-y)\rangle dy. \\
& - \int \frac{z_{xxx}(x) - z_{xxx}(x-y)}{|z(x)-z(x-y)|^{\al}}\log(|z(x)-z(x-y)|)dy \\
& + 2  \al \int \frac{z_{xx}(x) - z_{xx}(x-y)}{|z(x)-z(x-y)|^{2+\al}}\langle z_{x}(x) - z_{x}(x-y), z(x) - z(x-y)\rangle \log(|z(x)-z(x-y)|) dy \\
& +  \al \int \frac{z_{x}(x) - z_{x}(x-y)}{|z(x)-z(x-y)|^{2+\al}}|z_{x}(x) - z_{x}(x-y)|^{2}  \log(|z(x)-z(x-y)|) dy \\
& - \al (\al + 2) \int \frac{z_{x}(x) - z_{x}(x-y)}{|z(x)-z(x-y)|^{4+\al}}(\langle z_{x}(x) - z_{x}(x-y), z(x) - z(x-y)\rangle)^{2} \log(|z(x)-z(x-y)|) dy \\
& +  \al \int \frac{z_{x}(x) - z_{x}(x-y)}{|z(x)-z(x-y)|^{2+\al}}\langle z_{xx}(x) - z_{xx}(x-y), z(x) - z(x-y)\rangle \log(|z(x)-z(x-y)|) dy. \\
\end{align*}


There are some technical issues while calculating $I(\al)$ when $\al$ goes to 2 in the very big region, namely that $I(\al)$ is not well defined for $\al = 2$. To overcome this difficulty, we will extend $I(\al)$ to a function $\tilde{I}(\al)$ such that $I(\al) = \tilde{I}(\al)$ for $\al < 2$ and $\tilde{I}(2)$ is well defined.

In order to carry out this extension, we just notice that

\begin{align*}
\int \frac{z_{x}(x)-z_{x}(x-y)}{|z(x)-z(x-y)|^{\al}}dy =
\int \left(\frac{z_{x}(x)-z_{x}(x-y)}{|z(x)-z(x-y)|^{\al}} - \frac{z_{xx}(x) \sgn(y)}{\left|2\tan\left(\frac{y}{2}\right)\right|^{\al-1}}\right)dy.
\end{align*}

Then,

\begin{align*}
 \tilde{I}(\al) =  - \left(\frac{\langle \partial_t \tilde{z}_{x}(x), z_{xx}^{\perp}(x)\rangle}{c_{\al}|z_{x}|^3}\right)
 + \left(\frac{\langle \partial_t \tilde{z}_{xx}(x), z_{x}^{\perp}(x)\rangle}{c_{\al}|z_{x}|^3}\right),
\end{align*}

where

\begin{align*}
\tilde{z}_{xt} & = c_{\al} \int \frac{z_{xx}(x) - z_{xx}(x-y)}{|z(x)-z(x-y)|^{\al}}dy - c_{\al} \frac{z_{xxx}(x)}{|z_{x}(x)|^{\al}}\int \frac{\sgn(y)}{\left|2\tan\left(\frac{y}{2}\right)\right|^{\al-1}}dy \\
& - c_{\al} \al \int \frac{z_{x}(x) - z_{x}(x-y)}{|z(x)-z(x-y)|^{2+\al}}\langle z_{x}(x) - z_{x}(x-y), z(x) - z(x-y)\rangle dy \\
& + c_{\al} \al \frac{z_{xx}(x)}{|z_x(x)|^{2+\al}}\langle z_{xx}(x), z_x(x)\rangle \int \frac{\sgn(y)}{\left|2\tan\left(\frac{y}{2}\right)\right|^{\al-1}}dy \\
\tilde{z}_{xxt} & =  c_{\al} \int \frac{z_{xxx}(x) - z_{xxx}(x-y)}{|z(x)-z(x-y)|^{\al}}dy -c_{\al}\frac{z_{xxxx}(x)}{|z_x(x)|^{\al}} \int \frac{\sgn(y)}{\left|2\tan\left(\frac{y}{2}\right)\right|^{\al-1}}dy\\
& - 2 c_{\al} \al \int \frac{z_{xx}(x) - z_{xx}(x-y)}{|z(x)-z(x-y)|^{2+\al}}\langle z_{x}(x) - z_{x}(x-y), z(x) - z(x-y)\rangle dy \\
& + 2 c_{\al} \al \frac{z_{xxx}(x)}{|z_x(x)|^{2+\al}}\langle z_{xx}(x), z_x(x)\rangle \int \frac{\sgn(y)}{\left|2\tan\left(\frac{y}{2}\right)\right|^{\al-1}}dy\\
& - c_{\al} \al \int \frac{z_{x}(x) - z_{x}(x-y)}{|z(x)-z(x-y)|^{2+\al}}|z_{x}(x) - z_{x}(x-y)|^{2} dy \\
& + c_{\al} \al \frac{z_{xx}(x)}{|z_x(x)|^{2+\al}}|z_{xx}(x)|^{2} \int \frac{\sgn(y)}{\left|2\tan\left(\frac{y}{2}\right)\right|^{\al-1}}dy\\
& + c_{\al} \al (\al + 2) \int \frac{z_{x}(x) - z_{x}(x-y)}{|z(x)-z(x-y)|^{4+\al}}(\langle z_{x}(x) - z_{x}(x-y), z(x) - z(x-y)\rangle)^{2} dy \\
& - c_{\al} \al (\al + 2) \frac{z_{xx}(x)}{|z_x(x)|^{4+\al}}(\langle z_{xx}(x), z_x(x)\rangle)^{2} \int \frac{\sgn(y)}{\left|2\tan\left(\frac{y}{2}\right)\right|^{\al-1}}dy\\
& - c_{\al} \al \int \frac{z_{x}(x) - z_{x}(x-y)}{|z(x)-z(x-y)|^{2+\al}}\langle z_{xx}(x) - z_{xx}(x-y), z(x) - z(x-y)\rangle dy \\
& + c_{\al} \al \frac{z_{xx}(x)}{|z_x(x)|^{2+\al}}\langle z_{xxx}(x), z_x(x)\rangle \int \frac{\sgn(y)}{\left|2\tan\left(\frac{y}{2}\right)\right|^{\al-1}}dy.\\
\end{align*}

We remark that $\tilde{I}(2)$ is well defined. 
Again, since $c_{\al} > 0$ for $\al < 2$, we will only care about the sign of $I(\al)|z_{x}|^{3}$.


The algorithm for the computation of the integral is as follows: we define a structure called \texttt{ParameterSet}, which encapsulates all the necessary information about the parameters and the information needed by the integration procedures in order to perform the validation for those parameters. More precisely, a ParameterSet contains:
\begin{itemize}
\item Two intervals, \texttt{Left} and \texttt{Right}, which set the limits for the bounded and singularity regions (i.e. singularity $ = [\text{Left},\text{Right}]$, bounded $= [-\pi,\pi] \setminus$ singularity). In our proof, Left $=-\frac{1}{128}$, Right = $\frac{1}{128}$. See below for the precise definition and the integration procedures in these regions.
\item Two doubles, \texttt{AbsTol} and \texttt{RelTol}, which limit the precision up to which the integrals are computed. In our proof, AbsTol $=$ RelTol $= 10^{-6}$.
\item One interval, \texttt{$\alpha$}, which is the interval in the parameter space we are calculating.
\item One interval, \texttt{$C$}, which corresponds to the different initial conditions.	
\end{itemize}

We will use a queue (implemented using the Standard Template Library (STL) \texttt{Queue}), in which we store all the ParameterSets to be computed. While the queue is not empty, we take the top element, pop it and give an enclosure of integral we are validating for this region. Three different things can happen:
\begin{itemize}
\item The enclosure is positive.
\item The enclosure is negative.
\item We can not say anything about its positivity.
\end{itemize}

In the first two cases, the result is output to its corresponding file (one for the regions for which the aforementioned integral is positive, another for the ones for which it is negative). In the third case, the ParameterSet is split into other narrower ParameterSets which are pushed in the queue. This splitting is only done if the diameter of $\alpha$ is bigger than a given threshold, which in our case was set to $5 \cdot 10^{-6}$.


We now describe how to perform the integral over the bounded region.  The bounded part is calculated using a Gauss-Legendre quadrature of order 2, given by
 \begin{align*}
 \int_{a}^{b} f(\eta) d\eta &  \in  \frac{b-a}{2}\left(f\left(\frac{b-a}{2}\frac{\sqrt{3}}{3} + \frac{b+a}{2}\right)+f\left(-\frac{b-a}{2}\frac{\sqrt{3}}{3} + \frac{b+a}{2}\right)\right) \\
 & +\frac{1}{4320}(b-a)^{5}f^{4}([a,b]).
 \end{align*}

 Whenever the result does not satisfy some tolerance requirements in the form of having absolute or relative (with respect to the volume of the integration region) width smaller than the two constants AbsTol and RelTol the integration domain is split by the midpoint and we call the integrator again with the new two subdomains recursively. Otherwise we will save it and add it to the total. We also limit the depth of the levels of splitting in order to prevent infinite loops or stack overflows because of too stringent tolerances since the uncertainty of the parameters might yield wide enclosures of the integral even with infinite precision. In our case, the maximum number of subdivisions was 13, totalling a maximum number of subintervals equal to $2^{13}$.


Regarding the singularity region, we remark that if we try to evaluate the (singular) integrand directly, we get expressions of the type $\frac{0}{0}$ which can only be enclosed in unbounded intervals. In order to avoid this phenomenon we will bound the absolute value of the integrand and consider the contribution of the singularity region as a residual error. By virtue of the mean value theorem, we will substitute any expressions of the form $|(\partial_{x}^{k}z(a) - \partial_{x}^{k} z(b))|$ by the interval $|(a-b)||\partial_{x}^{k+1} z([a,b])|$, and these derivatives can be easily enclosed only by having to evaluate at the endpoints because of the monotonicity proved in Corollary \ref{corbddderivatives}. After estimating the integrals in this way, we are left with the task of having to bound quantities of the form 

\begin{align*}
\int C|x|^{0}dx, \quad  \int C|x|^{1-\alpha}dx,  \int C_1|x|^{1-\alpha}dx + \int C_2 |\log(C_3 x)| |x|^{1-\alpha} dx, \text{ or } \int C|x|^{2-\alpha}dx
\end{align*}
depending if we are in the first, second, third or fourth regime of $\alpha$ respectively, for some constants (intervals) $C,C_1,C_2,C_3$. These integrals can be computed explicitly in closed form. In practice, due to the exponential nature of $z_1$, the contribution of these integrals is extremely small compared to the true result.

We ran the computation in parallel (every core was allocated a different initial region) over 16 cores by splitting the regions of $\alpha$ on an Intel i5 processor with 4 GB of RAM. The total combined computation time was roughly   120 hours.

\end{proofthm}

\section*{Acknowledgements}

AC, DC, JGS and AMZ were partially supported by the grant MTM2011-26696 (Spain), grant StG-203138CDSIF of
the ERC and ICMAT Severo Ochoa project SEV-2011-0087. AC was partially supported by the ERC grant 307179-GFTIPFD. We are grateful to the Instituto de Ciencias Matem\'aticas for computing facilities (Cluster Odisea).

\bibliographystyle{abbrv}
\bibliography{references}

\begin{tabular}{ll}
\textbf{Angel Castro} & \textbf{Diego C\'ordoba} \\
{\small Instituto de Ciencias Matem\'aticas} & {\small Instituto de Ciencias Matem\'aticas}\\
{\small Universidad Aut\'onoma de Madrid} & {\small Consejo Superior de Investigaciones Cient\'ificas}\\
{\small C/ Nicolas Cabrera, 13-15, 28049 Madrid, Spain} & {\small C/ Nicolas Cabrera, 13-15, 28049 Madrid, Spain}\\
{\small Email: angel\underline{  }castro@icmat.es} & {\small Email: dcg@icmat.es}\\
   & \\
\textbf{Javier G\'omez-Serrano} & \textbf{Alberto Mart\'in Zamora}\\
{\small Department of Mathematics} & {\small Instituto de Ciencias Matem\'aticas}\\
{\small Princeton University} & {\small Consejo Superior de Investigaciones Cient\'ificas}\\
{\small 1102 Fine Hall, Washington Rd, } & {\small C/ Nicolas Cabrera, 13-15, 28049 Madrid, Spain}\\
{\small Princeton, NJ 08544, USA} & {}\\
 {\small e-mail: jg27@math.princeton.edu} & {\small Email: alberto.martin@icmat.es}
  \\

\end{tabular}


\end{document}